\title{Fast Hermitian Diagonalization with Nearly Optimal Precision}
\author{Rikhav Shah}
\date{June 2024}
\newcommand{\C}{{\mathbb{C}}}
\newcommand{\eps}{\varepsilon}
\newcommand{\normal}{{\mathcal N}}
\renewcommand{\Re}{\mathrm{Re}}
\renewcommand{\Im}{\mathrm{Im}}
\newcommand{\wt}{\widetilde}
\newcommand{\abs}[1]{\mleft|#1\mright|}
\newcommand{\magn}[1]{\left\|#1\right\|}
\newcommand{\rmagn}[1]{\|#1\|}
\newcommand{\pare}[1]{\mleft(#1\mright)}
\newcommand{\set}[1]{{\left\{{#1}\right\}}}
\newcommand{\sqbrac}[1]{{\left[{#1}\right]}}
\newcommand{\bmat}[1]{\begin{bmatrix}#1\end{bmatrix}}
\newcommand{\alg}[1]{\textnormal{\texttt{#1}}}
\newcommand{\ceil}[1]{\mleft\lceil#1\mright\rceil}
\newcommand{\spliteq}[2]{\begin{equation}#1\begin{split}#2\end{split}\end{equation}}
\DeclareMathOperator*{\poly}{poly}
\DeclareMathOperator*{\rank}{rank}
\DeclareMathOperator*{\tr}{tr}
\DeclareMathOperator*{\sign}{sign}
\DeclareMathOperator*{\range}{range}
\newtheorem{theorem}{Theorem}[section]
\newtheorem{lemma}[theorem]{Lemma}
\newtheorem{proposition}[theorem]{Proposition}
\theoremstyle{definition}
\newtheorem{definition}{Definition}[section]
\newtheorem{remark}{Remark}
\newcommand{\meps}{\mathbf{u}}
\newcommand{\mmu}{\mu_{\alg{MM}}(n)}
\newcommand{\qrmu}{\mu_{\alg{QR}}(n)}
\newcommand{\nmu}{\sqrt nc_{\alg N}}
\newcommand{\udeflate}{\meps_{\alg{DEFLATE}}}
\newcommand{\usign}{\meps_{\alg{SIGN}}}
\newcommand{\flops}{floating point operations}
\newcommand{\flop}{floating point operation}
\newcommand{\thealg}{\alg{EIGH-INTERNAL}}
\begin{document}

\maketitle

\begin{abstract}
    Algorithms for numerical tasks in finite precision simultaneously seek to minimize the number of floating point operations performed, and also the number of bits of precision required by each floating point operation.
    This paper presents an algorithm for Hermitian diagonalization requiring only $\lg(1/\eps)+O(\log(n)+\log\log(1/\eps))$ bits of precision where $n$ is the size of the input matrix and $\eps$ is the target error.
    Furthermore, it runs in near matrix multiplication time.
    
    In the general setting, the first complete analysis of the stability of a near matrix multiplication time algorithm for diagonalization is that of Banks et al \cite{b0}. They exhibit an algorithm for diagonalizing an arbitrary matrix up to $\eps$ backward error using only $O(\log^4(n/\eps)\log(n))$ bits of precision.
    This work focuses on the Hermitian setting, where we determine a dramatically improved bound on the number of bits needed.
    In particular, the result is close to providing a practical bound.
    The exact bit count depends on the specific implementation of matrix multiplication and QR decomposition one wishes to use, but if one uses suitable $O(n^3)$-time implementations, then for $\eps=10^{-15},n=4000$, we show $92$ bits of precision suffice (and 59 are necessary). By comparison, the same parameters in \cite{b0} does not even show that $682,916,525,000$ bits suffice.

\end{abstract}

\section{Introduction}
\label{a0}
This paper considers Hermitian diagonalization in finite arithmetic. Given a Hermitian matrix $A$ and target accuracy $\eps$, our goal is to compute nearly unitary $U$ and exactly diagonal $D$ such that $\magn{A-UDU^*}\le\eps\magn A$ with high probability.
An algorithm for such a task can be evaluated on two primary metrics: the number of \flops\,performed (which we call the runtime) and the number of bits of precision required for each \flop. 

Despite the widespread, highly successful use of procedures for this task in practice, the literature long lacked precise guarantees of their performance in finite arithmetic. It's common to treat $n$ and $\eps$ as formal variables satisfying the relations $\poly(n)\cdot\eps=\eps$ and $\eps^2=0$. This dramatically reduces the complexity of proofs as one only needs to keep track of the existence of a single error term, but introduces the possibility of subtle mistakes; namely one must be careful that the coefficients appearing in the $\poly(n)$ factor are absolute constants, and one also cannot apply either of those relations more than $\poly(n)$ times in the proof. This approach also precludes the determination of precise, quantitative bounds on stability. 
In general, the bound numerical analysts typically aim for is showing that $\lg(1/\eps)+O(\log n)$ bits suffice, or stated another way, that backward error is $\poly(n)\meps$ where $\meps^{-\#\text{ bits of precision}}$ is machine precision.
For Hermitian diagonalization in particular, Proposition \ref{a1} shows that no fewer than $\lg(1/\eps)+0.5\lg(n)-2$ bits suffice.

Most algorithms used and analyzed fall into two categories: QL/QR-algorithms first introduced by Francis \cite{b1,b2} and spectral divide-and-conquer algorithms first introduced by Beavers and Denman \cite{b3,b4,b5}.
These algorithms, as is thematic in this field, are frequently built on top of three essential primitives: matrix multiplication, QR decomposition, and matrix inversion. And so, the stability and runtime of algorithms depend on the stability and runtime of the implementations of these primitives. There is a trade-off among the best algorithms between runtime and stability. On the slow side, implementations of each of these primitives exist using $O(n^3)$ time requiring only $\lg(1/\eps)+\lg(n)+O(1)$ bits to achieve $\eps$ backward error. Faster implementations use the innovations of \cite{b6,b7} which show for each $\eta>0$ an implementation of matrix multiplication and QR decomposition using $O(n^{\omega+\eta})$ time requiring only $\lg(1/\eps)+O(\log(n))$ bits, where $O(n^\omega)$ is the speed of matrix multiplication in exact arithmetic. It also shows an implementation of inversion with the same runtime using $\lg(1/\eps)+O(\log(n)\log(\kappa(A)))$ bits to achieve a \textit{forward} error of $\eps$.

Unfortunately, all published QL/QR-algorithms use $O(n^3)$ time as they require reduction of a matrix to tridiagonal or Hessenberg form.
In exact arithmetic, the first globally convergent QR-algorithm for Hermitian matrices was proposed by \cite{b8}. Both \cite{b9,b10} 
bound the rate of convergence of that algorithm, resulting in an $O(n^3+n^2\log(1/\eps))$ time algorithm. \cite{b11} proposed a more expensive version that globally converges in the non-Hermitian setting as well. The follow-up work \cite{b12} analyzed it in finite precision, finding that it uses $\wt O(n^3)$ operations performed with $O(\log(n/\eps)^2\log\log(n/\eps))$ bits of precision plus $\wt O(n)$ operations performed with $O(\log(n/\eps)^4\log\log(n/\eps)^2)$ bits of precision. They also conjecture that $O(\log(n/\eps))$ bits suffice in the Hermitian setting.

Spectral divide-and-conquer algorithms better exploit fast primitives.
Many works can be adapted to form part of the ``divide'' step. In particular, computing the matrix sign or polar decomposition can both be used in the Hermitian case.
A history up to 1995 of algorithms computing the matrix sign can be found in Section 1 of \cite{b13}.  Since then, multiple works have appeared, typically assuming the input matrix is reasonably well conditioned\footnote{This assumption is satisfied with high probability by adding a random shift to the matrix.}. We highlight three algorithms for matrix sign / polar decomposition that have been explicitly incorporated into diagonalization procedures.%

Newton iteration: this method involves matrix inversion, and so the overall stability guarantee is substantially worse if one wants a near matrix multiplication time algorithm. Assuming access to stable inversion,
\cite{b14} shows a scaled version of Newton Iteration converges stably. \cite{b15} gives a much shorter argument to the same effect, but \cite{b16} points out a flaw in the proof arising from the ``$\poly(n)\cdot\eps=\eps$'' framework. \cite{b17} provides a much more compact proof in a way that generalizes to several other algorithms.
Finally, \cite{b0} provides a complete, unconditional analysis of Newton iteration using fast inversion, and furthermore incorporates that method into a full end-to-end analysis of a divide-and-conquer algorithm. They show that their diagonalization algorithm runs in near matrix multiplication time and succeeds when using $O(\log^4(n/\eps)\log(n))$ bits of precision. This was the first concrete bound appearing for any algorithm for diagonalization, and remains the best known bound among algorithms running in near-matrix multiplication time.

QR-based dynamically weighted Halley (QDWH): \cite{b18} finds a quickly-converging iterative scheme, QDWH, for computing the matrix sign built on top of QR decomposition. Unfortunately, the proof of stability of QDWH appearing in \cite{b17} (indeed the only known proof) requires a stronger notion of stability of QR decomposition than what \cite{b7} shows can be achieved quickly\footnote{\cite{b17} requires a ``per-row'' backward error guarantee. That is, $A,A'$ appearing in the leftmost inequality of (\ref{a2}) must be replaced by $e_j^*A,e_j^*A'$ for every $j\in[n]$. The difference in these guarantees is the most stark when the lengths of the rows of $A$ are quite different.}.
As a consequence, we only know how to stably perform QDWH in $O(n^3)$ time. \cite{b19} shows how to build a Hermitian eigen-decomposition on top of QDWH, following the usual divide-and-conquer setup.
However, the proof of its stability is incomplete, with at least a couple limitations.
The first limitation is stated explicitly as condition number 2. A crucial step of divide-and-conquer is computing a basis for the range of a projection matrix. \cite{b19} assumes this can be done stably; this paper finally provides that required analysis in Section \ref{a3}. The second limitation is choice of the ``split points''. \cite{b19} recommends a couple different techniques for picking the split points, which determine the size of the sub-problems. But the worst case guarantee is that a problem of size $n$ gets split into problems of size $1$ and $n-1$. This means the overall method may take $O(n^4)$ time.

Implicit repeated squaring (IRS): \cite{b20} improves upon the IRS method of \cite{b21} using \cite{b7} thereby giving a stable algorithm for computing an analog\footnote{Let $P_{(*)}$ denote the projection onto the span of the eigenvectors whose eigenvalues satisfy $(*)$. Then the matrix sign is $P_{(\cdot)>0}-P_{(\cdot)<0}$. \cite{b20,b21} compute $P_{\abs\cdot>1}-P_{\abs\cdot<1}$} of the matrix sign in near matrix multiplication time.
\cite{b20} also provides finite-arithmetic analysis of the full ``divide'' step; namely it tightens the analysis of rank-revealing URV appearing in \cite{b7} to show how to convert the approximate spectral projectors into approximate bases of the invariant subspaces.
However, they do not explicitly bound the error of the computed bases nor the accumulation of error throughout the entire divide-and-conquer algorithm.

The work \cite{b24} extends a key subroutine of \cite{b0} to the Hermitian pencil case, but does not reduce the precision required.
Another algorithm of note is that of \cite{b25}. They consider the general eigenpair problem and find an algorithm using homotopy continuation and running in $O(n^{10}/\eps^2)$ in general \footnote{The result is stated as $n^9/\sigma^2$ where $\sigma$ is the standard deviation of an normalized Gaussian perturbation, which must be order $\eps/\sqrt n$ for the desired error.}. They provide only an informal argument that their method is stable. 

\subsection{Contributions}
The main result of this paper is Theorem \ref{a4}. It presents an algorithm for Hermitian diagonalization running in near matrix multiplication time and shows it requires only $\lg(1/\eps)+O(\log(n)+\log\log(1/\eps))$ bits of precision\----near the optimal dependence on $\eps$ and linear in the optimal dependence on $n$.
This paper offers several contributions.

Proposition \ref{a1} states a concrete lower bound on required bits of precision; in particular, it shows the existence of two matrices with far apart true diagonalizations that would get rounded to the same matrix without at least $\lg(1/\eps)+0.5\lg(n)-2$ bits of precision.

Section \ref{a5} provides a rigorous quantitative analysis of the stability of Newton-Schulz iteration for computing the matrix sign function in the Hermitian setting. 
This iteration has been considered \cite{b26,b17} but quantitative bounds on it's stability have not appeared.
We show it to be significantly more stable than Newton iteration for matrix sign, which is used by \cite{b0}, though it only succeeds in the Hermitian setting.
Section \ref{a6} discusses this difference in stability in depth\----the improvements are owed both to the orthgonality of eigenvectors in this setting and to the fact Newton-Schulz is inverse-free.
This analysis enables the algorithmic improvement of replacing Newton with Newton-Schulz, resulting in a significant reduction in the precision's dependence on $\eps$\----namely $2\cdot\lg(1/\eps)$ compared to $O(\lg(1/\eps)^4)$ of \cite{b0}.

The next contribution, in Section \ref{a3}, is a streamlined version of the deflation algorithm appearing in \cite{b0}, along with a much tighter analysis. This strengthened analysis reduces the bit dependence on $\eps$ from $2\cdot\lg(1/\eps)$ to $1\cdot\lg(1/\eps)$ and saves several $\lg(n)$ terms, and therefore brings us to near-optimal precision.

Finally, in Section \ref{a7} we highlight two additional differences between our spectral bisection method and that of \cite{b0}.
The first difference\----discussed in Remark \ref{a8}\----is an adaptive setting of parameters used by spectral bisection. The most straightforward approach, used by \cite{b0}, geometrically decreases $\eps$ in recursive calls. But smaller $\eps$ leads to both a longer runtime and a higher bit requirement. We manage to decrease $\eps$ more slowly. The key to enabling this is tracking the depth of recursion and decreasing $\eps$ depending on the current depth. This again significantly reduces our bit requirement.
The second difference\----discussed in Remark \ref{a9}\----is the elimination of spectral shattering, which played a central role in the algorithm of \cite{b0}. Instead of shattering, we implement the suggestions in \cite{b20}: picking shifts randomly (as opposed to using binary search to find a good split point) and adding an additional base case (rather than just the $n=1$ case).

The rest of the introduction is dedicated to background and preliminaties. Section \ref{a5} addresses the primary bottleneck of \cite{b0}, which is the computation of the matrix sign function. Section \ref{a3} provides stronger analysis of deflate which allows us to achieve near-optimal dependence on $\eps$. Section \ref{a7} puts these pieces together into a spectral bisection method for Hermitian diagonalization.

\subsection{Model of computation}
We adopt the standard model of floating point arithmetic.
Numbers which are stored exactly are called floating point numbers.
For each $z\in\C$,
there exists a floating point number $\alg{fl}(z)$ satisfying
\[\abs{\alg{fl}(z)-z}\le\meps\abs z.\]
For each operation $\circ\in\set{+,-,\times}$ and pair of floating point numbers $x,y$, the result of computing $x\circ y$ in floating point arithmetic yields
\begin{equation}
\label{a10}
\abs{\alg{fl}(x\circ y) - x\circ y}\le\meps\abs{x\circ y}.
\end{equation}
Additionally, division by two can be done exactly. That is, $\alg{fl}(x/2)=\alg{fl}(x)/2$.
All nonzero values used in our algorithm are polynomial in $n,\eps^{-1}$, so we can avoid underflow and overflow errors by using $\lg\lg(n/\eps)+O(1)$ bits to store the exponent.

\begin{remark}[Double-double precision]
    One may simulate precision $\meps^2$ by approximating a real (or complex) $x$ by the sum $y+z$ where $y=\alg{fl}(x)$ and $z=\alg{fl}(x-y)$. This suggests a natural trade-off between precision and runtime: by increasing the number of \flops\, by a constant factor, one can get away with a constant factor reduction in the number of bits used. Though theoretically sound, this technique has serious drawbacks in practice. Namely, hardware implementations of various matrix operations assume that each entry of an input fits into a single machine word. That is, each entry of an  input matrix is a single floating point number and isn't abstractly expressed as the sum of two values. Because of this, we build our algorithm on top of subroutines that don't allow inputs to be formatted like this. Throughout this paper, every real or complex number is approximated by a single floating point number. With this constraint, it is possible to obtain a lower bound on the precision required; see Proposition \ref{a1}.
\end{remark}

\begin{proposition}[Lower bound]
\label{a1}
Any method that computes $U,D$ for a given symmetric $A$ satisfying $\magn{A-UDU^*}\le\eps\magn A$
requires $\lg(1/\meps)\ge\lg(1/\eps)+0.5\lg(n)-2$ bits of precision.
\end{proposition}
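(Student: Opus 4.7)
The plan is to exhibit two symmetric matrices $A_1, A_2$ that have identical floating point representations (so any algorithm must produce the same output $(U,D)$ on both), but whose spectral distance is so large that no single $(U,D)$ can satisfy the backward error requirement for both. Translating the condition ``no common output works'' gives $\|A_1-A_2\|>\eps(\|A_1\|+\|A_2\|)$, since otherwise the triangle inequality through $UDU^*$ would be satisfied. Thus the goal is to construct $A_1,A_2$ for which this inequality is forced, and to see what constraint that places on $\meps$.

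For the construction, I would start with a symmetric sign matrix $\tilde A\in\{-1,+1\}^{\nn}$ whose spectral norm satisfies $\|\tilde A\|\le c\sqrt n$ for an absolute constant $c$ (take $c=3$ to be concrete). Existence of such $\tilde A$ follows from standard concentration for random Rademacher symmetric matrices, or from explicit Hadamard-type constructions. Every entry of $\tilde A$ has magnitude exactly $1$, so each entry is a fixed floating point number, and any perturbation $E$ with $|E_{ij}|\le\meps$ leaves the floating point rounding of $\tilde A+E$ equal to $\tilde A$. Take $A_1=\tilde A+\meps J$ and $A_2=\tilde A-\meps J$, where $J$ is the all-ones matrix; both are symmetric, and both round to $\tilde A$.

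The key computations are then (i) $\|A_1-A_2\|=2\meps\|J\|=2\meps n$, exploiting that $J$ is the rank-one matrix of largest spectral norm with entries in $[-1,1]$; and (ii) $\|A_i\|\le\|\tilde A\|+\meps\|J\|\le c\sqrt n+\meps n$, which is $O(\sqrt n)$ as long as $\meps\le 1/\sqrt n$ (otherwise the stated bound holds trivially). Substituting into $\|A_1-A_2\|>\eps(\|A_1\|+\|A_2\|)$ and solving for $\meps$ yields $\meps\le O(\eps/\sqrt n)$, which after absorbing the constant $c$ into the additive term gives $\lg(1/\meps)\ge\lg(1/\eps)+0.5\lg(n)-2$ as desired.

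The main obstacle I anticipate is pinning down the rounding step rigorously: one must verify that with the particular rounding convention in the paper's floating point model, $\tilde A\pm\meps J$ really does produce the single floating point matrix $\tilde A$. If the rounding rule is unfavorable at the boundary $|\tilde A_{ij}+E_{ij}|=|\tilde A_{ij}|(1+\meps)$, I would simply shrink the perturbation to $(\meps/2)J$; this weakens the constant but leaves the $0.5\lg(n)$ factor intact and is easily absorbed into the $-2$ term. A secondary technical point is justifying the existence of $\tilde A$ with $\|\tilde A\|\le c\sqrt n$ uniformly in $n$, but this is a standard random matrix fact and does not meaningfully affect the bound.
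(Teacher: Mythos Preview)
Your proposal is correct and reaches the same conclusion, but the route differs from the paper's in one substantive way. The paper runs the algorithm on a single Hadamard matrix $A$, inspects the \emph{sign pattern} of the residual $A-UDU^*$, and then adversarially chooses $B=\pm J$ so that after adding $(\meps/2)B$ at least half the entries of the new residual have magnitude $\ge\meps/2$; from this entrywise information it asserts the spectral norm of the residual is at least $\meps n/4$. You instead take the symmetric pair $A_1=\tilde A+\meps J$, $A_2=\tilde A-\meps J$ and apply the triangle inequality directly to $\|A_1-A_2\|=2\meps\|J\|=2\meps n$. Your argument is cleaner: it replaces the sign-pattern step with a single exact spectral-norm computation, sidestepping any question about how to pass from an entrywise lower bound to a spectral-norm lower bound. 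The price is a slightly looser constant: with $\|\tilde A\|\le c\sqrt n$ you obtain $\lg(1/\meps)\ge\lg(1/\eps)+0.5\lg(n)-\lg(O(c))$, which matches the stated $-2$ only if you take $\tilde A$ to be (a symmetric) Hadamard so that $c=1$; with your concrete $c=3$ you land around $-2.6$, so the phrase ``easily absorbed into the $-2$ term'' is a slight overstatement unless you commit to the Hadamard construction. Your remark about shrinking the perturbation to $(\meps/2)J$ to avoid boundary rounding issues is exactly what the paper does from the start.
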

\begin{proof}
Let $A$ be an $n\times n$ Hadamard matrix, i.e. $\abs{a_{ij}}=1$ and $\magn A=\sqrt n$. Let $(U,D)$ be the result of diagonalizing $A$.
Consider the number of positive versus negative entries of the residual $A-UDU^*$.
If there are more positive entries, set $B$ to be the all ones matrix, otherwise set it to be the all minus one matrix.
Note $A':=A+(\meps/2)B$ will be stored exactly as $A$ in floating point arithmetic, so the result of diagonalizing $A'$ will again be $(U,D)$.
Thus the residual of diagonizing $A'$ is $A'-UDU^*=A-UDU^*+(\meps/2)B$. But by construction of $B$, at least half the entries of this residual have absolute value at least $\meps/2$. In particular, it's norm is at least ${\meps n}/4$, which we desire to be at most $\sqrt n\eps$. So we need $\meps\le4\eps/\sqrt n$.
\end{proof}

\newcommand{\unif}{\alg{Unif}}
\renewcommand{\normal}{\alg{Normal}}

\subsection{Subroutines}
The algorithm of this paper is built on top of several essential primitives. We assume black-box access to the following four methods.
\begin{definition}[From \cite{b6}]
\label{a12}
$\alg{MM}$ is an algorithm for matrix multiplication using $T_{\alg{MM}}(n)$ \flops\,satisfying
\begin{equation}\label{a13}
    \magn{\alg{MM}(A,\,B)-AB}\le\mu_{\alg{MM}}(n)\meps\cdot\magn A\cdot\magn B.
\end{equation}
$\mu_{\alg{MM}}(n)$ is a low-degree polynomial in $n$.
If $A=B^T$, then $\alg{MM}(A,B)$ will be exactly Hermitian.
For simplicity of many bounds we assume $\mmu\ge10$. We also assume $T_{\alg{MM}}(n)$ is convex.
\end{definition}
\begin{definition}[From \cite{b7}]
\label{a11}
$[Q,R]=\alg{QR}(A)$ is an algorithm for matrix multiplication using $T_{\alg{QR}}(n)$ \flops\,satisfying for some matrix $A'$ and unitary matrix $Q'$,
\begin{equation}\label{a2}
(Q')^*A'=R\text{ is upper triangular}
\quad\And\quad
\magn{Q-Q'}\le\mu_{\alg{QR}}(n)\meps
\quad\And\quad
\magn{A-A'}\le\mu_{\alg{QR}}(n)\meps\cdot\magn A.
\end{equation}
$\mu_{\alg{QR}}(n)$ is a low-degree polynomial in $n$. We assume $T_{\alg{QR}}(n)$ is convex.
\end{definition}

\begin{definition}
\label{a15}
$\unif$ is a method for computing approximately uniform random samples from symmetric intervals. It should use only a constant $T_\unif$ number of operations and satisfy the following.
If $c'$ is a random variable distributed uniformly on the real interval $[-s,s]$,
there exists a coupling of $\unif(s)$ and $c'$ such that
\(\unif(s)\in[-s,s]\And\abs{\unif(s)-c'}\le s\meps\)
with probability 1.
\end{definition}
\begin{definition}
\label{a16}
$\normal$ is a method for computing approximately normal random samples. It should use only a constant $T_{\alg N}$ number of operations.
Let $z$ be a complex Gaussian where $\Re(z)$ and $\Im(z)$ are i.i.d. Gaussian samples with mean 0 and variance $1/2$. Then for a constant $c_{\alg N}$, there exists a coupling of $\normal()$ and $z$ such that
\(\abs{\normal()-z}\le \abs{z}c_{\alg N}\meps\)
with probability 1.
\end{definition}

\section{Matrix sign function}
\label{a5}
\subsection{Insufficiency of Newton iteration}
\label{a6}
The bottleneck in the algorithm of \cite{b0} is in the estimation of the matrix sign function.
The iterative scheme used is the same one initially proposed by \cite{b3}:
\begin{equation}
\label{a19}
    A_0=A\quad\And\quad A_{k+1}=\frac{A_k+A_k^{-1}}2.
\end{equation}
In exact arithmetic, this is equivalent to running Newton iteration for the system $\lambda^2-1=0$, which enjoys quadratic convergence to $\pm1$ everywhere except for $\Re(\lambda)=0$. So for matrices with no eigenvalues on the imaginary axis, we have $A_k\to\sign(A)$ as $k\to\infty$.
In finite arithmetic, a crucial step in the proof of convergence is showing that the pseudospectrum of $A_k$ does not grow too quickly with $k$. This becomes more and more difficult the closer the eigenvalues of $A_k$ get to each other, so represents an obstacle for any iterative scheme, not just (\ref{a19}). In the bounds of \cite{b0}, the bit complexity required to handle this growth depends \textit{exponentially} in the number of iterations used.
They show at most
\[
\lg(1/(1-\alpha_0))
+3\lg\lg(1/(1-\alpha_0))
+\lg\lg((1/\beta\eps_0))
+O(1)
\]
iterations are needed, where for the purposes of this discussion you can think of $1/(1-\alpha),\beta,\eps_0$ as all being polynomials in $n,\eps^{-1}$. Each $1\cdot\lg\lg(x)$ term contributes another \textit{factor} of $\lg(x)$ to the bit complexity. 
But since these are not the dominate terms in the expression, shaving off the logs from the bit complexity of this algorithm, if possible, requires much care.
The Hermitian setting completely sidesteps this issue since the pseudospectra of Hermitian matrices are always well-behaved.

The secondary source of instability relates to this particular iterative scheme.
Specifically, to the computation of $A_k^{-1}$.
If one had a ``backward-stable'' algorithm $\alg{INV}$ for computing inverses such that for every $A$ there existed a small $E$ satisfying $\alg{INV}(A)=(A+\magn{A}E)^{-1}$, some algebra reveals that one would also obtain the ``forward error'' bound
\[\magn{A^{-1}-\alg{INV}(A)}\le O(\magn{E})\cdot\kappa(A).\]
Unfortunately, no such algorithm running in near matrix multiplication time is known. The closest we have is the work of \cite{b7}, which finds a \textit{logarithmically stable} algorithm, i.e. one satisfying the weaker guarantee that
\[\magn{A^{-1}-\alg{INV}(A)}\le O\pare{\magn{E}}\cdot\kappa(A)^{O(\log n)}.\]
So \cite{b0} must perform everything with enough precision to promise that $\magn{E}\ll\kappa(A)^{-O(\log n)}$. This necessitates another factor of $O(\log\kappa(A)\cdot\log(n))$ bits of precision.
For this reason, an `inverse-free' method that does not use inversion (even well-conditioned inversion) is desirable. %
This motivates the iterative scheme
\begin{equation}
\label{a20}
    A_0=A\quad\And\quad A_{k+1}=\frac{3A_k-A_k^3}2=\frac12\cdot A_k\cdot{(3I-A_k^2)}
    \end{equation}
corresponding to Newton-Schultz iteration, 
which in this instance is equivalent to Newton iteration for the system $\lambda^{-2}-1=0$.
This system has been considered \cite{b26,b17} but quantitative bounds on it's stability have not appeared.
Each iteration uses only two calls to a method for multiplying matrices and no other expensive primitives.
This method is insufficient for the non-Hermitian problem as it does not converge for many complex starting points $z$; for instance it does not converge when $\abs{\Im(z)}\ge2\abs{\Re(z)}$. Figure \ref{a21} shows the regions of convergence for the two methods.
Fortunately, it converges on the interval $(-\sqrt5,\sqrt5)$ in the real line, which is enough for Hermitian diagonalization.

\begin{figure}[h]
\begin{center}
\includegraphics[scale=0.5]{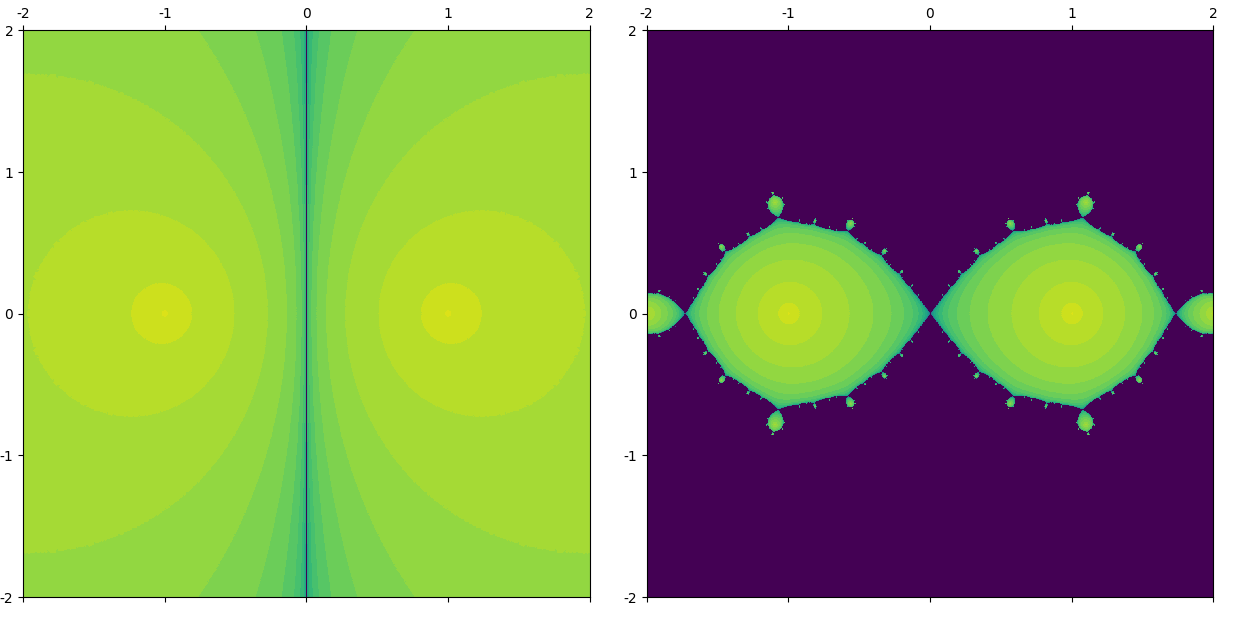}
\end{center}
\label{a21}
\caption{
Convergence plots for Newton iteration (left) and Newton-Schulz iteration (right).
The color denotes the number of iterations $k$ until $\abs{z_k^2-1}<10^{-15}$.
The lightest shade of yellow is one iteration and each ring denotes one additional iteration. Purple denotes ``does not converge''.}
\end{figure}

\begin{remark}[Polar decomposition]
For Hermitian matrices, computing the polar decomposition recovers the matrix sign since
$A=\sign(A)(A^*A)^{1/2}$ is the unique factorization of $A$ into the product of a unitary matrix $\sign(A)$ and a positive definite matrix $(A^*A)^{1/2}$.
\end{remark}

\begin{remark}[Faster iterative algorithms]
As we will see later, the number of iterations required for this method to converge depends on the condition number of the input matrix.
This paper is mostly concerned with the precision required, which in the Hermitian setting depends only mildly on the number of iterations required.
Additionally, random shifts are employed to ensure the condition number is never too large.
So analysis of faster iterative schemes is left to future work. Nevertheless it's worth discussing some candidates. QDWH of Nakatsukasa and Higham \cite{b18} for polar decomposition was shown to be stable in \cite{b17}, and experimentally was found to converge in just 6 iterations for $\eps=10^{-15}$ regardless of $n$ and $\kappa$.
However, the proof of stability requires QR-decomposition with a \textit{per-row} backward error. That is, each row of the input matrix may receive a multiplicative norm-wise perturbation, irrespective of the norms of the other rows. This can be accomplished via Householder reflections with pivoting in $O(n^3)$ time, but no reduction to matrix multiplication is known.
Their scheme is a `weighted' version of $A_{k+1}=(3A_k+A_k^3)(1+3A_k^2)^{-1}$
which adaptively changes the coefficients of the iterates as the method is run.
An even higher degree rational function is used by the method Zolo-pd of Nakatsukasa and Freund, which experimentally converges in just 2 iterations \cite{b29}, though no proof of stability is known.
Several more morally similar variants of that and (\ref{a19}), (\ref{a20}) can be found in \cite{b13,b30,b18,b31}.
These methods frequently require some additional knowledge about the matrix (e.g. QDWH) requires a crude upper estimate on the condition number) which can typically be computed in matrix multiplication time.
These methods present good candidates for replacing (\ref{a20}), but require complete analysis of stability and runtime in finite precision.
\end{remark}

As motivated in the previous subsection, we use Newton-Schulz iteration to approximate the matrix sign function.
Fix
\[g(x)=\frac{3x-x^3}2=\frac12\cdot x\cdot(3-x^2)\]
and implement it using $\alg{MM}$ by
\begin{equation}
\label{a22}
    \alg{g}(A)=\frac12\alg{MM}\pare{A,\pare{3I-\alg{MM}(A,A)}}.
\end{equation}
In exact arithmetic, by the functional calculus applying (\ref{a20}) to a Hermitian matrix is equivalent to applying (\ref{a20}) to it's eigenvalues. We need to argue that this equivalence does not break too much in the presence of round-off error. We also need to analyze the convergence of (\ref{a20}) in the presence of error when applied to scalars.
The next two lemmas accomplish the first task.
We start by bounding the forward error of computing a single iteration.
\begin{lemma}[One step error bound]
\label{a23}
Assume $\meps\le\min(1/3,\mmu^{-1})$.
Then $\alg{g}$ specified in (\ref{a22}) satisfies
\[\magn{\alg{g}(A)-g(A)}\le\mu_{\alg g}(n,\magn A)\meps\]
for
\[\mu_{\alg g}(n,a):=\frac12\pare{7+(6+\mmu)a^2}a.\]
Furthermore, such an implementation requires only
\[T_{\alg g}(n)=2T_{\alg{MM}}(n)+n^2+n\]
\flops.
\end{lemma}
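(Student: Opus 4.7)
The plan is to propagate rounding error through the three floating-point stages of $\alg{g}(A)$ and then read off the flop count by inspection.

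First, set $C := \alg{MM}(A, A)$, so by Definition \ref{a12}, $\magn{C - A^2} \le \mmu\meps\magn{A}^2$ and hence $\magn{C} \le (1+\mmu\meps)\magn{A}^2$. Second, let $B$ be the result of computing $3I - C$ in floating point. Only the $n$ diagonal subtractions $\alg{fl}(3 - C_{ii})$ contribute rounding error, each bounded by $\meps\abs{3 - C_{ii}}$; the off-diagonal entries are exact negations of $C_{ij}$. Thus $B = 3I - C + E_1$ with $E_1$ diagonal and $\magn{E_1} \le \meps\max_i\abs{3 - C_{ii}} \le \meps(3 + \magn{C})$. Third, let $D := \alg{MM}(A, B)$, giving $\magn{D - AB} \le \mmu\meps\magn{A}\magn{B}$. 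Because division by 2 is exact, $\alg{g}(A) = D/2$, and the target quantity is $\magn{\alg{g}(A) - g(A)} = \tfrac{1}{2}\magn{D - 2g(A)}$.

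The algebraic key is the identity
\[AB = A(3I - C + E_1) = 3A - A^3 + A(A^2 - C) + AE_1 = 2g(A) + A(A^2 - C) + AE_1,\]
so $D - 2g(A) = (D - AB) + A(A^2 - C) + AE_1$. The triangle inequality together with the three stability bounds above yields
\[\magn{D - 2g(A)} \le \mmu\meps\magn{A}\magn{B} + \mmu\meps\magn{A}^3 + \meps\magn{A}(3 + \magn{C}).\]
Substituting $\magn{C} \le (1+\mmu\meps)\magn{A}^2$ and $\magn{B} \le (1+\meps)(3 + (1+\mmu\meps)\magn{A}^2)$, and then collecting terms using $\mmu\meps \le 1$ (so that $\mmu\meps^2 \le \meps$ and $1 + \mmu\meps \le 2$) and $\meps \le 1/3$ (so that $1 + \meps \le 4/3$), produces a bound of the shape $(\alpha + \beta\magn{A}^2)\magn{A}\meps$ with $\alpha \le 7$ and $\beta \le 6 + \mmu$. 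Halving delivers the claimed $\mu_{\alg{g}}(n, \magn{A})\meps$.

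The flop count is then immediate: the two calls to $\alg{MM}$ contribute $2T_{\alg{MM}}(n)$ flops; forming $3I - C$ requires $n$ diagonal subtractions (the $n^2 - n$ off-diagonal sign flips being effectively free); and halving the $n \times n$ output matrix uses $n^2$ exact divisions. This sums to $2T_{\alg{MM}}(n) + n^2 + n$.

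The main obstacle is pinning down exactly the constants $7$ and $6 + \mmu$. A naive triangle inequality gives a bound of the correct shape, but since $\magn{B}$ inherits an $O(1)$ contribution from $\magn{3I} = 3$, the outer matrix-multiplication error $\mmu\meps\magn{A}\magn{B}$ threatens to contribute a $\Theta(\mmu)\magn{A}\meps$ term that would spoil the stated form. Exploiting the Hermitian structure $\sigma(A^2) \subseteq [0, \magn{A}^2]$ to sharpen $\magn{3I - A^2}$, and carefully distinguishing first-order from higher-order $\meps$ contributions so that mixed $\mmu\meps^2$ terms absorb into the $(6 + \mmu)\magn{A}^2$ bucket rather than the $\magn{A}\meps$ bucket, is where the real bookkeeping work lies.
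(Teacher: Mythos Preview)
Your error propagation through the three stages (inner $\alg{MM}$, diagonal subtraction, outer $\alg{MM}$, exact halving) is exactly the paper's approach, and your flop count is correct.

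The obstacle you flag in your final paragraph is real, and neither of your proposed remedies removes it. The outer call contributes $\mmu\,\meps\,\magn A\,\magn B$, and $\magn B$ is bounded \emph{below} by a positive constant near $3$: even exploiting the Hermitian spectrum to get $\magn{3I-A^2}\le 3$ for $\magn A\le\sqrt6$, you still have $\magn B\ge 3-O(\meps)$, so this term is a genuine first-order $\Theta(\mmu)\magn A\meps$ contribution. It cannot be pushed into an $O(\meps^2)$ bucket.

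The paper obtains the constant $7$ precisely by dropping the $\mmu$ from this step: it writes $\alg{MM}(A,B)=AB+\magn A\magn B\,E_4$ with $\magn{E_4}\le\meps$, whereas Definition~\ref{a12} gives only $\magn{E_4}\le\mmu\,\meps$. With the factor restored, the honest bound has $O(\mmu)$ in place of $7$. This is immaterial for every downstream use in the paper\----only $\mu_{\alg g}(n,1.1)$ is ever invoked, and only through the crude estimate $\mu_{\alg g}(n,1.1)\le c\,\mmu$, which survives with a slightly larger constant\----but it does mean the specific expression $\tfrac12\bigl(7+(6+\mmu)a^2\bigr)a$ is not what a correct accounting yields. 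Your instinct that something is off was right; the resolution is that the stated constant is an artifact of an omitted $\mmu$, not of a clever bookkeeping trick you were missing.
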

\begin{proof}
We can numerically compute $g$ as
    In this proof, $E_k$ denote matrices with $\magn{E_k}\le\meps$.
    Let $\alg{MM}(A,A)=A^2+\magn{A}^2\mmu E_1$.
    Let $B$ be the result of numerically subtracting $\alg{MM}(A,A)$ from $3I$. This subtraction incurs an entry-wise multiplicative $(1+\meps)$ error along the diagonal only. In particular, because the error is diagonal the entry-wise absolute value bound is upgraded to a matrix norm bound for free. In particular,
    \begin{align*}
        B&=3I-\alg{MM}(A,A)+\magn{3I-\alg{MM}(A,A)}E_2
       \\&=3I-A^2-\magn{A}^2\mmu E_1+\magn{\pare{3I-A^2-\magn{A}^2\mmu E_1}}E_2
    \end{align*}
    so the forward error of computing $B$ is at most
    \[
    \magn{A}^2\mmu\meps+\pare{3+\magn{A}^2+\magn{A}^2\mmu\meps}\meps
\le
    \pare{3+(2+\mmu)\magn{A}^2}\meps
    \]
    Let $\mu_B=3+(2+\mmu)\magn{A}^2$ so $B=3I-A^2+\mu_BE_3$. Then
    \begin{align*}
    \alg{MM}(A,B)-2g(A)
      &=AB-2g(A)+\magn{A}\magn{B}E_4
    \\&=\mu_B AE_3+\magn{A}\magn{B}E_4.
    \end{align*}
    Finally, division by $2$ can be done exactly by decrementing the exponent.
    So the forward error of computing $g(A)$ is at most
    \begin{align*}
    \frac12\pare{\mu_B\magn{A}\meps+\magn{A}\pare{3+\magn{A}^2+\mu_B\meps}\meps}
      &=\frac12\pare{\mu_B+3+\magn{A}^2+\mu_B\meps}\magn A\meps
    \\&=\frac12\pare{7+(3+\mmu)\magn{A}^2+(2+\mmu)\magn{A}^2\meps}\magn A\meps
    \\&=\frac12\pare{7+(6+\mmu)\magn{A}^2}\magn A\meps
    \end{align*}
    as required.

\end{proof}
\begin{remark}
The work \cite{b17} considers an alternate implementation of $\alg{g}$, namely 
\[\alg{g}(A)=\frac{3A-\alg{MM}(A,\alg{MM}(A,A))}2.\]  
It shows that this implementation is stable, but only gives a qualitative analysis. In particular, as far as this author can tell, this implementation suffers from an extra factor of $n$ in the error bound. This comes since the addition is dense and therefore incurs $\meps$ error in every entry, as opposed to merely the diagonal as in the implementation (\ref{a22}). 
\end{remark}
The next lemma shows adding noise does not change the value of $\sign$ by too much.
\begin{lemma}
\label{a24}
    Let $A,B,\eps$ be such that $0$ is not in the interior of $\Lambda_\eps(A)$ and $\magn{A-B}<\eps$. Then
    \[\magn{\sign(A)-\sign(B)}\le n\cdot\frac{\magn{A-B}}{\eps-\magn{A-B}}.\]
\end{lemma}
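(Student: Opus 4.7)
The plan is to express both $\sign(A)$ and $\sign(B)$ via the Dunford (holomorphic functional) calculus on a carefully chosen contour, and then compare them using the resolvent identity. First I would pick any $r$ with $\magn{A-B}<r<\eps$. The assumption that $0$ is not in the interior of $\Lambda_\eps(A)$ implies (at least in the Hermitian/normal setting relevant here, where the $\eps$-pseudospectrum is just the $\eps$-neighborhood of the spectrum) that every eigenvalue $\lambda_i$ of $A$ satisfies $\abs{\lambda_i}\ge\eps$, so the open disk of radius $r$ about $\lambda_i$ lies strictly in the same half-plane as $\lambda_i$. I would let $\Gamma$ be the positively oriented boundary of the union of these $n$ disks\----a disjoint union of at most $n$ loops whose total arclength is bounded by the sum of the $n$ full circle perimeters, hence at most $2\pi rn$. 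Since the spectrum of $B$ lies in $\Lambda_{\magn{A-B}}(A)$ and $\magn{A-B}<r$, every eigenvalue of $B$ also lies inside this union of disks, so $\Gamma$ encloses both spectra while $\sign(z)$ is locally constant and equal to $\pm1$ on each connected component.

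The Dunford calculus then gives $\sign(M)=\frac{1}{2\pi i}\oint_\Gamma \sign(z)(zI-M)^{-1}\wrt z$ for $M\in\set{A,B}$; subtracting and applying the resolvent identity $(zI-A)^{-1}-(zI-B)^{-1} = (zI-A)^{-1}(A-B)(zI-B)^{-1}$ yields
\[\sign(A)-\sign(B) = \frac{1}{2\pi i}\oint_\Gamma \sign(z)(zI-A)^{-1}(A-B)(zI-B)^{-1}\wrt z.\]
On $\Gamma$ the distance from $z$ to the nearest eigenvalue of $A$ is exactly $r$, so $\magn{(zI-A)^{-1}}\le 1/r$; a Neumann-series perturbation then gives $\magn{(zI-B)^{-1}}\le 1/(r-\magn{A-B})$; and $\abs{\sign(z)}=1$. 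Combining with the $2\pi rn$ arclength bound,
\[\magn{\sign(A)-\sign(B)}\le \frac{1}{2\pi}\cdot 2\pi rn\cdot\frac{1}{r}\cdot\frac{\magn{A-B}}{r-\magn{A-B}} = \frac{n\magn{A-B}}{r-\magn{A-B}},\]
and sending $r\to\eps^-$ gives the claimed bound.

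The main obstacle I expect is keeping track of the degenerate geometry of $\Gamma$ when eigenvalues are coincident or nearby enough that their disks overlap, without losing the $2\pi rn$ length bound. Defining $\Gamma$ as the boundary of the union rather than the union of the boundaries handles the overlap issue automatically, since the boundary of $\bigcup_i D_r(\lambda_i)$ is always contained in $\bigcup_i\partial D_r(\lambda_i)$. Choosing $r$ strictly between $\magn{A-B}$ and $\eps$ is also essential: it keeps $\Gamma$ bounded away from the spectrum of $B$, so that $(zI-B)^{-1}$ is analytic on a neighborhood of $\Gamma$, and strictly in the complement of the imaginary axis, so that $\sign$ extends to a locally constant holomorphic function on a neighborhood of $\Gamma$.
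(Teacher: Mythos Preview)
Your proposal is correct and follows essentially the same approach as the paper: both express the difference $\sign(A)-\sign(B)$ as a contour integral over (the boundary of) a union of disks about the eigenvalues of $A$, apply the resolvent identity, and then combine the resolvent bounds $1/\eps$ and $1/(\eps-\magn{A-B})$ with the arclength bound $2\pi n\eps$ to obtain the stated inequality. The only cosmetic differences are that the paper integrates the bare resolvent over each connected component of $\partial\Lambda_\eps(A)$ and then sums the resulting projectors, whereas you fold the $\sign(z)$ weight into a single integral and work at radius $r<\eps$ before letting $r\to\eps^-$; your version is arguably cleaner since it sidesteps the edge case where $0$ lies exactly on $\partial\Lambda_\eps(A)$.
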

\begin{proof}
    Let $\gamma$ be the boundary of a connected component of $\Lambda_\eps(A)$. Say it encloses $k$ eigenvalues.
    Then
\begin{align*}
    \magn{\frac1{2\pi}\oint_\gamma\sqbrac{
    \pare{z-A}^{-1}-\pare{z-B}^{-1}
    }}
&=
    \magn{\frac1{2\pi}\oint_\gamma\sqbrac{
    \pare{z-B}^{-1}(A-B)\pare{z-A}^{-1}
    }}
\\&\le
    \frac1{2\pi}\cdot\text{length}(\gamma)\cdot\magn{A-B}\cdot\frac1{\eps-\magn{A-B}}\cdot\frac1\eps
\\&\le k\cdot\magn{A-B}\cdot\frac1{\eps-\magn{A-B}}.
\end{align*}
By the assumption, none of these components cross the imaginary axis. So we may sum over the appropriate components to form the spectral projector onto all the positive or negative eigenvalues. This gives the desired bound.
\end{proof}

We now tackle converge for scalars.
One notices that $x=-1,0,1$ are fixed points of $g$ and that $g'(-1)=g'(1)=0$, which is needed for quadratic convergence. Unfortunately, the iterations do not converge to $\pm1$ for $x=0$ (where it stays at $0$) and for $x\ge\sqrt 5$ (where it does not converge at all). Additionally, which fixed point it converges to for $x\in\pm[\sqrt3,\sqrt5)$ is difficult to control.
We start by showing monotone convergence, even in the presence of error.
\begin{lemma}[Monotone convergence]\label{a25}
    Fix $u\in(0,3/16]$ and $\abs\xi\le u$. Then
    \begin{align*}
        \sign(x)&=\sign(g(x)+\xi) && \forall x\in\pm\pare{u,\,\sqrt{3}-(\sqrt{3}-1)u}, \\
        \abs{x} &\le\abs{g(x)+\xi}\le1+u && \forall x\in[(8/3)u,\,1-(8/3)u].
    \end{align*}
\end{lemma}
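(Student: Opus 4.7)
The plan is to treat the two inequalities separately by analyzing the polynomials $g(x) = \frac{1}{2}x(3-x^2)$ and $g(x)-x = \frac{1}{2}x(1-x^2)$ on positive intervals; since $g$ is odd, the symmetric interval in the first claim reduces immediately to the case $x \in (u, \sqrt{3} - (\sqrt{3}-1)u)$ with $x > 0$. For that first claim, it suffices to show $g(x) > u$, since then $g(x) + \xi \ge g(x) - u > 0$ matches $\sign(x) = +1$. Because $g'(x) = \frac{3}{2}(1-x^2)$, the function $g$ is increasing on $[0,1]$ and decreasing on $[1, \sqrt{3}]$, so its minimum on the interval is attained at one of the two endpoints. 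At the left endpoint, $g(u) - u = \frac{1}{2} u(1-u^2) > 0$ since $u \le 3/16 < 1$. At the right endpoint $x_1 = \sqrt{3} - (\sqrt{3}-1)u$, setting $v = (\sqrt{3}-1)u$ yields $3 - x_1^2 = v(2\sqrt{3} - v)$, hence $g(x_1)/u = (\sqrt{3} - v)(\sqrt{3}-1)(2\sqrt{3} - v)/2$, which is monotonically decreasing in $v \ge 0$. The worst case is $u = 3/16$, where a one-line numerical check gives a ratio of roughly $1.94 > 1$.

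For the second claim I need both $g(x) - x \ge u$ (lower bound) and $g(x) \le 1$ (upper bound) on the interval $[(8/3)u, 1 - (8/3)u] \subset [0,1]$. The upper bound is immediate, since $g$ attains its global maximum $g(1) = 1$ on $[0,1]$, so $g(x) + \xi \le 1 + u$. For the lower bound, let $f(x) = g(x) - x = \frac{1}{2} x(1-x^2)$ and $a = (8/3)u \in (0, 1/2]$. Since $f'(x) = \frac{1}{2}(1-3x^2)$, $f$ is unimodal on $[0,1]$ with peak at $1/\sqrt{3}$, so its minimum on $[a, 1-a]$ is attained at an endpoint. A short computation gives $f(a) - f(1-a) = \frac{1}{2} a(1-a)(2a-1) \le 0$ whenever $a \le 1/2$, so the minimum is $f(a) = \frac{1}{2} a(1-a^2)$. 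Substituting $a = (8/3)u$, the desired $f(a) \ge u$ reduces to $\frac{4}{3}(1 - \frac{64}{9}u^2) \ge 1$, i.e.\ $u^2 \le 9/256$, i.e.\ $u \le 3/16$---exactly the stated hypothesis. Hence $g(x) + \xi \ge g(x) - u \ge x = |x|$, and since $g(x)+\xi \ge x > 0$, the absolute value on the left can be dropped.

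The main obstacle---if it qualifies as one---is recognizing that the interval endpoints in the statement ($\sqrt{3} - (\sqrt{3}-1)u$ in the first claim; $(8/3)u$ and $1 - (8/3)u$ in the second) are chosen precisely so that the corresponding polynomial inequalities become tight at $u = 3/16$. Once the right substitutions are made, everything reduces to elementary single-variable calculus, and the oddness of $g$ handles the negative half of each interval for free.
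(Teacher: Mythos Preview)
Your proof is correct. The approach differs from the paper's in a small but instructive way.

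The paper exploits \emph{concavity}: since $g$ is concave on $[0,\sqrt3]$, it lies above the piecewise-linear interpolant through the nodes $0,1,\sqrt3$; likewise $g(x)-x$ is concave on $[0,1]$ and lies above its interpolant through $0,\tfrac12,1$. The interval endpoints $u$, $\sqrt3-(\sqrt3-1)u$, $(8/3)u$, $1-(8/3)u$ are then precisely where the respective splines equal $u$, so the required inequality $g(x)>u$ (resp.\ $g(x)-x\ge u$) drops out of a linear computation, valid for every $u\in(0,3/16]$ without any numerical check.

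You instead use \emph{unimodality}: $g$ has its maximum at $1$ and $g(x)-x$ at $1/\sqrt3$, so on any subinterval the minimum sits at an endpoint, and you evaluate there directly. This is equally valid; the price is that the right endpoint in the first claim no longer yields a clean linear expression, forcing you into the monotonicity-in-$v$ argument plus a numerical evaluation at $u=3/16$. The paper's spline bound avoids that step and simultaneously explains \emph{why} the endpoints take their particular form (they are the exact crossing points of the spline with the level $u$), which your closing remark about ``tightness at $u=3/16$'' only partially captures. For the second claim the two arguments are essentially equivalent, since your endpoint computation recovers the same threshold $u\le 3/16$.
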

\begin{proof}
We prove the statements for $x\ge0$ and the results for $x\le0$ follow symmetrically by noting that $g$ is odd. $g(x)$ is concave on $[0,\,\sqrt3]$ so is lower bounded by its linear spline with nodes $[0,\,1,\,\sqrt3]$. Namely,
\[
g(x)\ge\begin{cases}
    x&0\le x\le1\\
    \sqrt{3}/(\sqrt{3}-1)-{x}/(\sqrt{3}-1)&1\le x\le\sqrt3
\end{cases}
\]
which is larger than $u$ for $x\in(u,\,\sqrt3-(\sqrt3-1)u)$. This implies $\sign(g(x)+\xi)=1$ establishing the first claim.
Now note the polynomial $g(x)-x=(x-x^3)/2$ is concave on the region $[0,1]$ and so is lower bounded by its linear spline with nodes $[0,\,1/2,\,1]$. Namely,
\[
g(x)\ge\begin{cases}
    (3/8)x & 0\le x\le1/2 \\
    (3/8)(1-x) & 1/2\le x\le1
\end{cases}
\]
which is at least $u$ for $x\in[(8/3)u,\,1-(8/3)u]$. This implies $g(x)-u\ge x$ establishing the second claim. Finally by the triangle inequality $\abs{g(x)+\xi}\le\abs{g(x)}+\abs{\xi}\le1+u$.
\end{proof}
In exact arithmetic, this iteration enjoys quadratic convergence. In finite precision, we have quadratic convergence for a large range of values.
We will analyze the convergence of the iterations using the potential function $m(x)=\abs{1-x^2}$. 
\begin{lemma}[Quadratic convergence]
\label{a26}
When $20\abs{\xi}\le\abs x\le1-\sqrt{10\abs\xi}$, one has
\[m(x)^2\ge m(g(x)+\xi).\]
When $\abs{x}\le\sqrt2$ and $\abs{\xi}\le1$, one has
\[m(x)^2+4\abs\xi\ge m(g(x)+\xi).\]
\end{lemma}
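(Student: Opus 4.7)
The engine will be the algebraic identity
\[1 - g(x)^2 = \frac{(1-x^2)^2(4-x^2)}{4},\]
which I would prove by factoring $1 - g(x)^2$ as a polynomial in $x$, using that $\pm1$ are double zeros and $\pm2$ are simple zeros. Translating into the potential $m$, this reads $m(g(x)) = m(x)^2 (4-x^2)/4$, which already yields exact quadratic convergence for $|x|\le 2$ when $\xi=0$. Combined with the triangle inequality it gives the master bound
\[m(g(x)+\xi) \le \frac{m(x)^2(4-x^2)}{4} + 2|g(x)||\xi| + \xi^2.\]

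For the second inequality, the same identity implies $|g(x)| \le 1$ whenever $|x| \le 2$ (since $1 - g(x)^2 \ge 0$ there), while $(4-x^2)/4 \le 1$ trivially. So for $|x| \le \sqrt{2}$ and $|\xi|\le 1$ the master bound's right-hand side is at most $m(x)^2 + 2|\xi| + |\xi| = m(x)^2 + 3|\xi|$, which is stronger than the claim.

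For the first inequality, I will use that $g$ is odd to reduce to $x = a > 0$. Subtracting $m(x)^2(4-x^2)/4$ from both sides of the master bound, the target $m(g(x)+\xi) \le m(x)^2$ becomes
\[a(3-a^2)|\xi| + \xi^2 \le \frac{(1-a^2)^2 a^2}{4},\]
using the sharp form $2|g(a)| = a(3-a^2)$ (the convenient looser estimate $|g(a)|\le 3a/2$ will not suffice). I will then split at $a=1/2$: for $a \le 1/2$ I plug in the hypothesis $|\xi| \le a/20$ and reduce to a quadratic in $z = 1 - a^2$ whose positive root lies just below $3/4$, so it is non-negative on $z \in [3/4, 1]$; for $a \ge 1/2$ I plug in the hypothesis $|\xi| \le (1-a)^2/10$, divide through by $(1-a)^2$, and reduce to a quartic in $a$ that I verify by checking positivity at $a=1/2$ and using that its first and second derivatives are positive on $[1/2,1]$.

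The main obstacle will be tightness: numerical checks show the inequality has slack of under one percent when $|\xi|$ simultaneously saturates both hypotheses near $a \approx 1/2$, so the splitting point $1/2$ must match the hypotheses exactly, both constraints on $|\xi|$ must be used (neither alone covers all $a$), and the $\xi^2$ term cannot be casually discarded. One minor preliminary is that $m(g(x)+\xi) = 1 - (g(x)+\xi)^2$ carries no absolute value throughout the range of part one, which follows from the factorization $1 - g(x) = (1-x)^2(2+x)/2 \ge (1-x)^2 \ge 10|\xi|$ and hence $|g(x)+\xi| < 1$.
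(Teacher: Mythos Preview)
Your proposal is correct and takes a genuinely different route from the paper's own proof. The paper never writes down the factorization $1-g(x)^2=(1-x^2)^2(4-x^2)/4$; instead, for each of the two regimes $x\le 1/2$ and $x\ge 1/2$ it works directly with the condition $g(x)+\xi\ge\sqrt{2x^2-x^4}$, squares, and reduces to explicit polynomial inequalities (for $x\ge 1/2$ it invokes Descartes' rule of signs on the quartic $25x^4+60x^3+26x^2-32x+1$). For the second claim, the paper bounds $|m'|\le 4$ on $[-2,2]$ and applies the mean value theorem to get the constant $4$.

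Your identity-based approach is more conceptual: the factorization does all the work at once, the master bound unifies both claims, and you obtain the sharper constant $3|\xi|$ in the second inequality essentially for free. The price is that your triangle inequality is slightly lossy (the $+\xi^2$ term could have been $-\xi^2$ in the worst-case sign), which is why your resulting quartic $25a^4+60a^3+24a^2-28a-1$ differs from the paper's; both have the same razor-thin margin ($P(1/2)=1/16$) so neither route has slack to spare. Your verification plan for the two polynomial inequalities is sound: the quadratic $100z^2-20z-41$ has positive root $(1+\sqrt{42})/10\approx 0.748<3/4$, and for the quartic $P''>0$ everywhere with $P'(1/2)=53.5>0$ and $P(1/2)=0.0625>0$, so $P>0$ on $[1/2,1]$.
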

\begin{proof}
Note $g$ is odd and $m$ is even, and that the condition on $\xi$ is symmetric in $x$. So it suffices to prove the statement for $x\ge0$.
Fix any $s\in[-1,1]$.
Simple algebraic manipulations reveal that
\begin{align*}
    \abs x\le0.5 &\implies
    \abs x\le\sqrt{\frac{180-\sqrt{180^2-4\cdot100\cdot41}}{200}} \\ &\implies
    41-180x^2+100x^4\ge0 \\ &\implies
    (30-10x^2-1)^2\ge20^2\cdot(2-x^2) \\ &\implies
    30-10x^2-1\ge20\sqrt{2-x^2} \\ &\implies
    \frac{3-x^2}2-\sqrt{2-x^2}\ge s/{20} \\ &\implies
    g(x)-\sqrt{2x^2-x^4}\ge s/{20} \\ &\implies
    g(x)-sx/20\ge\sqrt{2x^2-x^4} \\ &\implies
    (g(x)-sx/20)^2-1\ge{2x^2-x^4-1} \\ &\implies
    \abs{(g(x)-sx/20)^2-1}\le\abs{1-x^2}^2 \\ &\implies
    m(g(x)-sx/20)\le m(x)^2.
\end{align*}
By the Descartes rule of signs, the polynomial $25x^{4}+60x^{3}+26x^{2}-32x+1$ has at most two positive roots, and changes sign at the points $0,1/4,1/2$. So the polynomial is positive for $x>1/2$. In the below, assume $x\le\sqrt 2$.
\begin{align*}
    x\ge0.5 &\implies
    \pare{x-1}^2\pare{25x^{4}+60x^{3}+26x^{2}-32x+1}\ge0\\ &\implies
    \pare{15x-5x^3-(x-1)^2}^2\ge100x^2\pare{2-x^2}\\ &\implies
    15x-5x^3-(x-1)^2\ge10x\sqrt{2-x^2}\\ &\implies
    g(x)-\sqrt{2x^2-x^4}\ge{(x-1)^2}/10  \\ &\implies
    (g(x)-s{(x-1)^2}/10)^2\ge{2x^2-x^4}  \\ &\implies
    (g(x)-s{(x-1)^2}/10)^2-1\ge{-1+2x^2-x^4}  \\ &\implies
    m(g(x)-s{(x-1)^2}/10)\le m(x)^2.
\end{align*}
Note
\[\min\pare{x/{20},\,{(x-1)^2}/{10}}=\begin{cases}x/20 & x\le0.5 \\ (x-1)^2/10 & x\ge0.5\end{cases},\]
so whenever $\abs\xi\le\min\pare{x/{20},\,{(x-1)^2}/{10}}$ for $x\in[0,\sqrt2]$
we have $m(g(x)+\xi)\le m(x)^2$.
Solving for $x$ in terms of $\abs\xi$ yields the desired result.
One may take $\xi=0$ for any $x$ so $m(g(x))\le m(x)^2$ for all $\abs x\le\sqrt2$.
The derivative of $m$ is bounded by $4$ on the interval $[-2,2]$, and $g(x)+\xi\in[-1,1]$ for $\abs{x}\le\sqrt2,\abs\xi\le1$. So $m(g(x)+\xi)\le m(g(x))+4\abs{\xi}\le m(x)^2+4\abs{\xi}$ establishing the second claim.
\end{proof}

\begin{lemma}[Overall scalar convergence]
\label{a27}
Fix a precision $u$ and tolerance $\eps$ so that $10u\le\eps\le3/80$. Let $x_0\in\pm[20u,1.5]$ and $x_{k+1}=g(x_k)+\xi_k$ for adversarial $\abs{\xi_k}\le u$. Then for
\[N\ge N_{\alg{SCALAR}}(x_0,\eps):= 2.5+2\lg\min(\abs{x_0},0.5)^{-1}+\lg\lg(1/\eps)\]
one has
\[\abs{1-x_N^2}\le\eps.\]
\end{lemma}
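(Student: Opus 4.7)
The plan is to track the potential $T_k := -\lg m(x_k) = -\lg|1-x_k^2|$ through the iterations, exploiting the quadratic convergence supplied by Lemma~\ref{a26}. By the oddness of $g$ we may assume $x_0 > 0$. If $x_0 \in [1,1.5]$, one preliminary iteration reduces to the case $x_0 \le 1+u$: since $g$ is decreasing on $[1,\sqrt 3]$ with $g([1,1.5]) = [0.5625,1]$, we get $x_1 \in [0.5625-u,\,1+u]$. Thereafter Lemma~\ref{a25} keeps $x_k$ positive, monotonically nondecreasing on $[(8/3)u,\,1-(8/3)u]$, and bounded above by $1+u < \sqrt 2$.

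The core of the argument is the first bound of Lemma~\ref{a26}: whenever $x_k \in [20u,\,1 - \sqrt{10u}]$, one has $m(x_{k+1}) \le m(x_k)^2$ with no additive error, so $T_{k+1} \ge 2T_k$ and consequently $T_k \ge 2^k T_0$ while the range condition holds. Reaching $T_k \ge \lg(1/\eps)$ therefore costs $k \ge \lg\lg(1/\eps) + \lg(1/T_0)$ iterations. For $x_0 \le 1$, the inequality $-\lg(1-t) \ge t/\ln 2$ applied to $t = x_0^2$ gives $T_0 \ge x_0^2/\ln 2$, so $\lg(1/T_0) \le 2\lg(1/x_0) + \lg(1/\ln 2) \le 2\lg(1/x_0) + 1$. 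For $x_0 \ge 1/2$ the cruder bound $T_0 \ge -\lg(3/4) \ge 0.415$ gives $\lg(1/T_0) \le 1.27$. These combine to $\lg(1/T_0) \le 2\lg\min(x_0, 1/2)^{-1} + O(1)$, yielding the dominant $2\lg\min(|x_0|,0.5)^{-1} + \lg\lg(1/\eps)$ terms of $N_{\alg{SCALAR}}(x_0,\eps)$.

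It remains to handle the cleanup once $x_k$ crosses $1-\sqrt{10u}$. At that moment $m(x_k) \le 2\sqrt{10u}-10u \le 2\sqrt{\eps}$ (using $u \le \eps/10$), and a small constant number of further iterations suffice: the second bound of Lemma~\ref{a26} gives $m(x_{k+1}) \le m(x_k)^2 + 4u$, so the squared term collapses to $O(u)$ and the additive $4u \le 2\eps/5$ is strictly less than $\eps$. These $O(1)$ cleanup iterations together with the preliminary edge-case iteration fit inside the additive $+2.5$ in the bound. The main obstacle is the careful constant accounting to ensure the sum of all these $O(1)$ terms does not exceed $2.5$; apart from this bookkeeping, the proof is a clean combination of Lemma~\ref{a25} and Lemma~\ref{a26}. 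One final check is that the bound remains valid for $N$ strictly larger than $N_{\alg{SCALAR}}$: since $m(x_k)^2 + 4u \le \eps$ whenever $m(x_k) \le \eps$ (using $\eps \le 3/80$ and $u \le \eps/10$), the condition $m(x_k) \le \eps$ is stable under further iteration.
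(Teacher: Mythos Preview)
Your approach mirrors the paper's proof: both use monotonicity (Lemma~\ref{a25}) to confine iterates to a safe range, the pure quadratic bound (first clause of Lemma~\ref{a26}) during a ``Phase 1'' interval $[20u,\,1-\delta]$, and the second clause of Lemma~\ref{a26} for a two-step cleanup once the iterate exits through the top. The stability check at the end is also the same.

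Two presentational differences are worth noting. First, you take the Phase~1 threshold to be $1-\sqrt{10u}$ while the paper uses $1-\sqrt{\eps}$; since $10u\le\eps$ your Phase~1 is larger, but the paper's choice makes the exit value $m(x_M)\le 2\sqrt{\eps}$ fall out directly (you recover the same bound via $\sqrt{10u}\le\sqrt{\eps}$ anyway). Second, you split off $x_0\in[1,1.5]$ with a preliminary iteration, whereas the paper starts the analysis uniformly from $x_1$ and dispatches all cases with the single observation that either $|x_1|\ge|x_0|$ (by monotonicity when $|x_0|\le 1-(8/3)u$) or $|x_1|\ge 0.5$ (otherwise). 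That inequality $|x_1|\ge\min(|x_0|,0.5)$ replaces your case split and is exactly what makes the additive constant come out to $\lg(2\log 2)+2\approx 2.47<2.5$ without further bookkeeping.

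On the constant you flag as the main obstacle: it is indeed tight but doable. The paper's chain is $m(x_{M-1})\ge\sqrt{\eps}$ (since $x_{M-1}\in S$), $m(x_{M-1})\le m(x_1)^{2^{M-2}}$, and $-\ln(1-x_1^2)\ge x_1^2$, giving $2^{M-2}\le(\log 2)\lg(1/\eps)/(2x_1^2)$ and hence $M+2\le 2+\lg(2\log 2)+2\lg(1/|x_1|)+\lg\lg(1/\eps)$. Your $T_k$ formulation yields the identical numbers once you work from $T_1$ rather than $T_0$. One small correction: the global bound $|x_k|\le 1+u$ for $k\ge 1$ follows from $g([-1.5,1.5])\subset[-1,1]$ plus $|\xi_k|\le u$, not from Lemma~\ref{a25} (whose upper bound only applies on the restricted domain $[(8/3)u,\,1-(8/3)u]$).
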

\begin{proof}
Note
\begin{align*}
    m(x_k)\le\eps  \implies
m(x_{k+1})\le m(x_k)^2+4u\le\eps^2+4u\le(\eps+2/5)\eps\le\eps
\end{align*}
so it suffices to argue $m(x_k)\le\eps$ for some $k\le N:=N_{\alg{SCALAR}}(x_0,\eps)$.
We first claim $\abs{x_k}\le1+u$ for all $k>0$.
The range of $g$ on the domain $[-1.5, 1.5]$ is $[-1,1]$,
so $x_k\in[-1.5,1.5]\implies x_{k+1}\in[-1-u,1+u]\subset[-1.5,1.5]$. Then note $x_0\in[-1.5,1.5]$ by assumption so the claim follows by induction.
Thus if $\abs{x_k}\ge1$, we'd have $m(x_k)\le(1+u)^2-1=2u+u^2\le\eps$, so we may assume $\abs{x_k}\le1$ for $k=1,\cdots,N$.

Let $S=[20u,1-\sqrt{\eps}]$. Let $M$ be the lowest positive index such that $\abs{x_M}\not\in S$. If $M\le N-2$, then $\abs{x_M}\in(1-\sqrt{\eps},1]$ and
$m(x_M)\le1-(1-\sqrt{\eps})^2=2\sqrt{\eps}$ so
\begin{align*}
    m(x_{M+1})
&\le m(x_M)^2+4u\le(2\sqrt \eps)^2+4u=4\eps+4u\le4.4\eps,
\\
m(x_{M+2})
&\le m(x_{M+1})^2+4u\le(4.4\eps)^2+4u=4.4^2\eps^2+0.4\eps\le\eps,
\end{align*}
which would establish the claim. Let us now bound $M$. If $x_1\not\in S$, then $M=1$. Otherwise, $x_1\in S\subset[(8/3)u,1-(8/3)u]$, Lemma \ref{a25} implies $x_k$ is monotonically increasing in $k$ while in $S$ so in fact $x_1,\cdots,x_{M-1}\in S$. Since $1-\sqrt{\eps}\le1-\sqrt{10u}$, Lemma \ref{a26} implies quadratic convergence for $k\le M$,
\[m(x_{k})\le m(x_{k-1})^2\le m(x_{k-2})^{2^2}\le\cdots\le m(x_1)^{2^{k-1}}.\]
Since $x_{M-1}\in S$, we have $m(x_{M-1})\ge 1-(1-\sqrt{\eps})^2=2\sqrt{\eps}-\eps\ge\sqrt{\eps}$. Taking logs gives
\begin{align*}
    \sqrt{\eps}\le(1-x_1^2)^{2^{M-2}}
   &\implies
    \frac12\lg{\eps}\le2^{M-2}\lg(1-x_1^2)\le-2^{M-2}\cdot\frac1{\log2}\cdot x_0^2
\\ &\implies
    \frac{\log 2}{2x_1^2}\lg{(1/\eps)}\ge2^{M-2}
\\ &\implies
    \lg\pare{\frac{\log 2}{2x_1^2}}+\lg\lg{(1/\eps)}\ge{M-2}
\\ &\implies
    \lg\pare{2\log 2}+2\lg(1/\abs{x_1})+\lg\lg{(1/\eps)}\ge M.
\end{align*}
Note $\abs{x_1}\ge\abs{x_0}$ or else $\abs{x_0}\in[1-(8/3)u,1.5]$ implying $\abs{x_1}\ge0.5$. So $\lg(1/\abs{x_1})\le\lg\min(\abs{x_0},0.5)^{-1}$. Then $\lg(2\log2)<0.5$. Combining those together yields $M\le N-2$ as required.
\end{proof}

We're now ready to state and analyze the algorithm for approximating $\sign(A)$. In the below, $\alg g$ is defined by (\ref{a22}).
\begin{algorithm}[H]
    \caption{$\alg{SIGN}(A,\eps,b)$}
    \label{a28}
\begin{algorithmic}[1]
    \Require Nonsingular Hermitian matrix $A$ with $\magn A\le b$, desired accuracy $\eps$.
    \Ensure $\magn{\alg{SIGN}(A,\eps,b)-\sign(A)}\le\eps$.
    \State $A_0\gets A/b$
    \State $k\gets 0$
    \Repeat
    \State$A_{k+1}\gets\alg{g}(A_k)$
    \State$k\gets k+1$
    \Until$\magn{I-\alg{MM}(A_k,A_k)}_{\max}\le\eps/(4n)$
    \State
    \Return $A_k$
\end{algorithmic}
\end{algorithm}

\begin{theorem}[Main guarantee for $\alg{SIGN}$]
\label{a29}
Let \[N:= N_{\alg{SIGN}}(A,\eps,n):= N_{\alg{SCALAR}}\pare{\frac1{\magn{A^{-1}}\cdot b},\,\frac\eps{8n}}.\]
Algorithm \ref{a28} has the advertised properties
when run with
\[\meps\le\meps_{\alg{SIGN}}(\eps,b,\magn{A^{-1}}):=\frac1{\magn{A^{-1}}\cdot b}\cdot\frac1{4\max\pare{N\cdot n\mu_{\alg g}(n,1.1),n^2}}\cdot\eps\]
using
\[3N\cdot T_{\alg{MM}}(n)+O(Nn^2)\]
\flops.
\end{theorem}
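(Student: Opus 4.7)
The plan is to lift the scalar analysis from Lemma \ref{a27} to the matrix level by tracking eigenvalues, and then to handle the invariant-subspace drift separately. By Definition \ref{a12}, $\alg{MM}(A,A)$ is exactly Hermitian on Hermitian input, so every iterate $A_k$ is Hermitian and Lemma \ref{a23} gives the per-step forward error $\magn{A_{k+1}-g(A_k)}\le\mu_{\alg g}(n,\magn{A_k})\meps$. First I would establish the inductive invariant $\magn{A_k}\le1.1$: the upper bound in Lemma \ref{a25} guarantees that if the eigenvalues of $A_k$ all lie in $[-1-u,1+u]$ for small $u$, then after one $g$-step plus noise of size $u$ they remain in the same interval. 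This pins the per-step error uniformly to $\delta:=\mu_{\alg g}(n,1.1)\meps$.

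Because $A_{k+1}-g(A_k)$ is Hermitian of norm at most $\delta$, Weyl's inequality supplies a matching under which $\abs{\lambda_i(A_{k+1})-g(\lambda_{\pi_k(i)}(A_k))}\le\delta$ (the permutation $\pi_k$ enters because $g$ is non-monotone on the relevant interval). Threading these matchings backwards identifies each eigenvalue of $A_N$ with a scalar iterate $x_{k+1,i}=g(x_{k,i})+\xi_{k,i}$ starting from $x_{0,i}=\lambda_i(A)/b$ with $\abs{\xi_{k,i}}\le\delta$. The worst-case starting magnitude is $s:=1/(\magn{A^{-1}}b)$, and the sign-preservation part of Lemma \ref{a25} locks in the sign of each eigenvalue throughout the iteration.

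At this point I would invoke Lemma \ref{a27} with noise parameter $u=\delta$ and tolerance $\eps/(8n)$. Unpacking the stated $\meps_{\alg{SIGN}}$ bound verifies both $10u\le\eps/(8n)\le3/80$ and $\abs{x_{0,i}}\ge20u$; the smallest starting eigenvalue fixes the iteration count at exactly $N=N_{\alg{SCALAR}}(s,\eps/(8n))$. After $N$ iterations every eigenvalue $\mu_i$ of $A_N$ satisfies $\abs{1-\mu_i^2}\le\eps/(8n)$ with the correct sign, so $\magn{I-A_N^2}\le\eps/(8n)$. Accounting for one more multiplication in the stopping test gives $\magn{I-\alg{MM}(A_N,A_N)}_{\max}\le\eps/(8n)+\mmu\meps(1.1)^2\le\eps/(4n)$, so the loop halts at iteration $N$ (or earlier).

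The last and most delicate step is converting the eigenvalue-level guarantee into the matrix-level bound $\magn{A_N-\sign(A)}\le\eps$, since the eigenvectors of $A_N$ need not match those of $A$. I would compare $A_N$ to the exact iterate $B_N=Ug^N(\Lambda/b)U^*$ (which shares eigenvectors with $A$) and split $\magn{A_N-\sign(A)}\le\magn{A_N-B_N}+\magn{B_N-\sign(A)}$; the second summand is just the noiseless scalar bound $\max_i\abs{g^N(\lambda_i/b)-\sign(\lambda_i)}\le\eps/(8n)$. The hard part is $\magn{A_N-B_N}$: a naive matrix Lipschitz argument on $g$ amplifies the per-step noise geometrically in $N$, whereas the iteration is essentially contractive near $\pm I$ because $g'(\pm1)=0$, and the per-step drift of the positive-eigenspace projector is controlled by $\delta$ divided by the spectral gap of $A_k$ around $0$. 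That gap starts at roughly $2s$ and grows doubly exponentially towards $\Theta(1)$, so summing telescopes to $O(\mu_{\alg g}(n,1.1)\meps(N+\magn{A^{-1}}b))$, which the stated $\meps_{\alg{SIGN}}$ is exactly calibrated to keep below $\eps$. The runtime is then immediate: each of the $N$ iterations calls $\alg{MM}$ twice inside $\alg g$, once in the stopping test, and uses $O(n^2)$ scalar arithmetic, for a total of $3N\cdot T_{\alg{MM}}(n)+O(Nn^2)$ flops.
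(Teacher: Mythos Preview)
Your eigenvalue-tracking argument (Weyl plus Lemma \ref{a27}) and the termination half of the stopping-criterion analysis match the paper. The divergence is in the ``most delicate step''. The paper does \emph{not} compare $A_N$ to the exact iterate $B_N$; instead it telescopes the sign itself:
\[
\magn{A_M-\sign(A)}\le\magn{A_M-\sign(A_M)}+\sum_{k=1}^{M}\magn{\sign(A_k)-\sign(A_{k-1})}+\magn{\sign(A_0)-\sign(A/b)}.
\]
The key identity is $\sign\circ g=\sign$ on $[-\sqrt3,\sqrt3]$, so each summand is $\magn{\sign(g(A_{k-1})+E_{k-1})-\sign(g(A_{k-1}))}$, which Lemma~\ref{a24} bounds by $n\cdot\mu_{\alg g}(n,1.1)\meps\big/(a/b-\mu_{\alg g}(n,1.1)\meps)$ using only the crude initial gap $a/b=1/(\magn{A^{-1}}b)$. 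Summing $N$ such terms gives the product $N\cdot n\mu_{\alg g}(n,1.1)\cdot\magn{A^{-1}}b\cdot\meps$, which is exactly the structure of $\meps_{\alg{SIGN}}$.

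Your route through $B_N$ is a detour: once you unpack ``projector drift'' you are back to bounding $\magn{\sign(A_N)-\sign(A)}$, and the comparison to $B_N$ adds a term without saving anything. More concretely, your claimed accumulation $O(\mu_{\alg g}(n,1.1)\meps(N+\magn{A^{-1}}b))$ is off: it drops the factor of $n$ that Lemma~\ref{a24} necessarily contributes, and it has a \emph{sum} where the theorem's $\meps_{\alg{SIGN}}$ has a \emph{product} $N\cdot n\mu_{\alg g}\cdot\magn{A^{-1}}b$ in the denominator. If your bound were correct the stated $\meps_{\alg{SIGN}}$ would be far from ``exactly calibrated''. The sharper bound you sketch (using that the gap grows geometrically under $g$) may be true, but it is not proved here and is not what the theorem asserts. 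Finally, a small gap: the algorithm returns $A_M$ with $M\le N$, and your argument only controls $A_N$; the paper closes this by reading the eigenvalue bound $\abs{1-\lambda_j(A_M)^2}\le\eps/2$ directly off the stopping criterion at the actual halting step $M$.
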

\begin{proof}
Let $a=\magn{A^{-1}}^{-1}$ so that the spectrum of $A/b$ is contained in $[-1,-a/b]\cup[a/b,1]$. Then by Lemma \ref{a24},
\[\magn{\sign(A_0)-\sign(A/b)}\le n\cdot\frac{\magn{A_0-A/b}}{a/b-\magn{A_0-A/b}}\le n\cdot\frac{n\meps}{a/b-n\meps}\le\frac{2n^2}{a/b}\meps.\]
Lemma \ref{a23} implies one can express
\[A_{k+1}=g(A_k)+E_k,\quad\quad\magn{E_k}\le\mu_{\alg g}(n,\magn{A_k})\meps.\]
We claim that $\magn{A_k}\le1.1$ for all $k$. It's clearly true $k=0$. Then inductively, \[\magn{A_{k+1}}\le1+\mu_{\alg g}(n,\magn{A_k})\meps\le1+\mu_{\alg g}(n,1.1)\meps\le1.1\] for our selection of $\meps\le0.1\cdot\mu_{\alg g}(n,1.1)^{-1}$. As a consequence, $\magn{E_k}\le\mu_{\alg g}(n,1.1)\meps$. In particular, each eigenvalue of $A_{k+1}$ is contained in the $\mu_{\alg g}(n,1.1)\meps$ pseudospectrum of $g(A_k)$, i.e. can be expressed as $g(\lambda)+\xi$ where $\lambda$ is an eigenvalue of $A_k$ and $\abs\xi\le\mu_{\alg g}(n,1.1)\meps$.
Therefore, Lemma \ref{a27} means for our selection of $N$ that
that each eigenvalue of $A_N$ satisfies $\abs{1-\lambda_j(A_N)^2}\le\eps/(8n)$.
In particular, \[
\magn{I-\alg{MM}(A_N,A_N)}_{\max}
\le\magn{I-\alg{MM}(A_N,A_N)}
\le2\magn{I-A_N^2}
=2\sup_j\abs{1-\lambda_j(A_N)^2}\le\eps/(4n)\]
so the algorithm terminates no later than iteration $N$.
On the other hand, say $M$ is the iteration on which the algorithm terminates. Then
\[
\frac\eps{4n}
\ge\magn{I-\alg{MM}(A_N,A_N)}_{\max}
\ge\frac12\magn{I-A_N^2}_{\max}
\ge\frac1{2n}\magn{I-A_N^2}
\ge\frac1{2n}\sup_j\abs{1-\lambda_j(A_N)^2}
\]
Therefore since $A_M$ and $\sign(A_M)$ are simultaneously unitarily diagonalizable, we have
\[\magn{A_M-\sign(A_M)}
\le\max_j\abs{\lambda_j(A_M) - \sign(\lambda_j(A_M))}
\le\max_j\abs{\lambda_j(A_M)^2 - 1}
\le\eps/2.\]
Note $\sign\circ\,g=\sign$ on the domain $[-\sqrt3,\sqrt3]$ so
\begin{align*}
\magn{\sign(A_{k+1})-\sign(A_{k})}
  &=\magn{\sign(A_{k+1})-\sign(g(A_{k}))}
\\&=\magn{\sign(A_{k+1})-\sign(A_{k+1}-E_k)}
\\&\le n\cdot\frac{\magn{E_k}}{a/b-\magn{E_k}}
\\&\le n\cdot\frac{\mu_{\alg g}(n,1.1)\meps}{a/b-\mu_{\alg g}(n,1.1)\meps}
\\&\le\frac{2n\mu_{\alg g}(n,1.1)}{a/b}\meps
\end{align*}
where the third step is by Lemma \ref{a24} and last is a rearrangement of the requirement on $\meps$.
By the triangle inequality,
\begin{align*}
\magn{A_M-\sign(A)}
  &=  \magn{A_M-\sign(A_0)}+\magn{\sign(A_0)-\sign(A)}
\\&\le\magn{A_M-\sign(A_M)}+\sum_{j=1}^M\magn{\sign(A_{k})-\sign(A_{k-1})}+\magn{\sign(A_0)-\sign(A/b)}.
\\&\le\frac\eps2+\pare{N\cdot\frac{2n\mu_{\alg g}(n,1.1)}{a/b}+\frac{2n^2}{a/b}}\meps
\\&\le\frac\eps2+\frac\eps2\\&=\eps
\end{align*}
as required.
The number of \flops\,used is $N\cdot(T_{\alg g}(n)+T_{\alg{MM}}(n)+n^2)+n^2$. Using $T_{\alg g}(n)\le2T_{\alg{MM}}(n)+O(n^2)$ from Lemma \ref{a23} gives the final result.
\end{proof}

\begin{remark}[$b$ parameter]
    If one is not supplied with the value $b$ in the call $\alg{SIGN}(A,\eps,b)$, an acceptable value can easily be computed in $O(n^2)$ or $O(T_{\alg{MM}}(n))$ time by taking $b=\magn{A}_F$ or $b=\tr(A^{2p})^{1/(2p)}$. In that case, $\magn{A^{-1}}\cdot b$ would be somewhere in between the condition number and weighted condition number of $A$.
\end{remark}

\section{Analysis of deflate}
\label{a3}
The secondary bottleneck in \cite{b0} and a main limitation of \cite{b19} is the computation of $\alg{DEFLATE}$.
Deflation is a procedure for recovering from a low rank matrix $P$ (often a projection), an orthonormal basis for its range.
To the author's knowledge, up to minor variations, the algorithm we reproduce here is the only one running in near matrix multiplication time\footnote{One candidate alternative is QR-decomposition with pivoting, which is recommended by \cite{b21}, but no efficient reduction to matrix multiplication is known.}.
\cite{b0} conceives of this algorithm as a slight modification of the rank-revealing QR-decomposition analyzed by \cite{b7}.
It turns out to be equivalent to the proposal by \cite{b19}, which conceives of the algorithm as running a single iteration of subspace iteration. However, \cite{b19} glosses over the requirements of the starting matrix, which is a non-trivial part of the analyses of \cite{b0,b7}. 
The algorithm is exceedingly simple and is essentially the following: output the QR-decomposition of the first $\rank(P)$ columns of $PG$ where $G$ is the random ``starting'' matrix.
Notice that the method completely fails if $\rank(PG)<\rank(P)$, and struggles when the $\rank(P)$th singular value of $PG$ is small. In order to ensure this value is sufficiently large often enough, one must include an additional factor of $\poly(n)$ in the precision. This factor is unavoidable. However, a much larger issue appearing in \cite{b0} can be avoided. Their argument for the correctness of $\alg{DEFLATE}$ goes through the following argument: if the input is the projection $UU^*$ and output is $Q$, they first show that $U^*Q$ is close to a unitary matrix $W$. Then they convert this into a bound on $\magn{U-QW^*}$, which is needed by the spectral bisection method.
But this conversion introduces a square-root in the error. To see this, consider $Q=\bmat{\cos\sqrt{2\eps}&\sin\sqrt{2\eps}}^*$ and $U=\bmat{1&0}^*$. In this example, $U^*Q\approx1-\eps$ whereas $\magn{U-Q}\approx\sqrt{2\eps}$.
According to this analysis, in order to get the desired accuracy out of $\alg{DEFLATE}$, one must double the number of bits used to overcome the square-root.
We strengthen this analysis by removing the square-root, thereby reducing the bit requirement by a factor of two. The additional insight allowing this is that not only is $U^*Q$ close to unitary, we can show that $(U^\perp)^*Q$ is close to $0$ where $U^\perp$ is a basis for an orthogonal complement of the range of $U$.
This turns out to be sufficient to remove the square-root, giving a tight analysis (up to constants).

This analysis also removes the restriction appearing in \cite{b0} that the input matrix is close to a matrix satisfying $\rank(A^2)=\rank(A)$. This restriction wasn't an issue for our setting since $A$ is an always an orthogonal projection matrix, but the removal of the restriction may be of independent interest.
Our implementation of $\alg{DEFLATE}$ is the following.
\begin{algorithm}[H]
\caption{\alg{DEFLATE}$(\wt A,r)$}
\label{a31}
\begin{algorithmic}[1]
\Require There exists $A\in\C^{n\times n}$ and parameters $t,x,\beta>0$ such that
\[\magn{\wt A-A}\le\beta\le\frac15\cdot\frac{\sigma_r(A)}{\sigma_1(A)+2}\cdot\frac x{(2\sqrt2+t)\sqrt n}\le1\]
and $\rank(A)=r$.
\Ensure For output $\wt U$, there exists semi-unitary $U\in\C^{n\times r}$ such that $\range(U)=\range(A)$ and
\[\magn{\wt U-U}\le6\cdot\frac{\sigma_1(A)+2}{\sigma_r(A)}\cdot\frac{(2\sqrt2+t)\sqrt n}{x}\cdot\beta\]
with probability $1-2e^{-nt^2}-(r/2)x^2$.
\State$\wt G_{ij}=\normal()\quad\forall i,j\in[n]$
\State$M=\alg{MM}(\wt A,\wt G)$
\State$(Q,R)=\alg{QR}(M)$
\State\Return First $r$ columns of $Q$
\end{algorithmic}
\end{algorithm}
A convenient choice of parameters is $x=\sqrt{\rho/r}$ and $t=\sqrt{\log(4/\rho)/n}$ for some $\rho\in(0,1)$. Then for each $0<\eta\le6/5$, if
\[\beta\le\frac{\rho^{1/2}\eta}{\sqrt{nr}}\cdot\frac{\sigma_r(A)}{\sigma_1(A)+2}\cdot\frac1{12\sqrt2+6\sqrt{\log(4/\rho)/n}}\]
then
\[\magn{\wt U-U}\le\eta\]
with probability $1-\rho$.
\begin{remark}
    The only difference between our method \ref{a31} and the one appearing in \cite{b0} is that we take $G$ to be a matrix of Gaussians rather than a Haar unitary matrix. In particular, \cite{b0} replaces $G$ with first output of $\alg{QR}(G)$.
    However, this requires an additional call to $\alg{QR}$ that is ultimately not necessary and in fact introduces some additional error.
\end{remark}

In the following two lemmas, $G$ is an $n\times n$ matrix with i.i.d. complex Gaussian entries.
\begin{lemma}[Theorem 3.2 from \cite{b33}]\label{a32}
    \[\Pr\pare{\sigma_n(G)\le x}\le\frac n2x^2.\]
\end{lemma}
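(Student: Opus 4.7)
The plan is to translate the statement into a question about the smallest eigenvalue of the complex Wishart matrix $W = G^*G$, exploiting that $\sigma_n(G)^2 = \lambda_n(W)$. The eigenvalues of $W$ form a Laguerre Unitary Ensemble, so the ordered eigenvalues $\lambda_1 \ge \cdots \ge \lambda_n \ge 0$ admit the explicit joint density
\[p(\lambda_1,\ldots,\lambda_n) \;=\; C_n \prod_{i<j}(\lambda_i-\lambda_j)^2\prod_i e^{-\lambda_i}\]
(after, if needed, rescaling to match the variance convention of Definition \ref{a16}), where $C_n$ is a Selberg-type normalization constant. This closed form is what makes a sharp nonasymptotic estimate possible at all.

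First I would argue that the marginal density $f_n(t)$ of $\lambda_n$ is monotonically decreasing in $t$ on its support. Fixing any ordering $\lambda_1 \ge \cdots \ge \lambda_{n-1} \ge \lambda_n \ge 0$, both the weight $e^{-\lambda_n}$ and each factor $(\lambda_j - \lambda_n)^2$ of the Vandermonde piece decrease pointwise as $\lambda_n$ grows toward $\lambda_{n-1}$. Integrating out $\lambda_1,\ldots,\lambda_{n-1}$ preserves this monotonicity in $\lambda_n$, so $f_n$ is decreasing, and therefore $\Pr(\lambda_n \le t) \le t\cdot f_n(0)$.

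Second, I would compute $f_n(0)$ directly. Setting $\lambda_n=0$ in the joint density collapses the Vandermonde factor to $\prod_{j<n}\lambda_j^2 \cdot \prod_{i<j<n}(\lambda_i-\lambda_j)^2$. The marginal at zero is then $C_n$ multiplied by the integral of $\prod_{j<n}\lambda_j^2 \cdot \prod_{i<j<n}(\lambda_i-\lambda_j)^2 \prod_{j<n} e^{-\lambda_j}$ over the ordered orthant. This is recognizable as the total mass of the LUE of size $n-1$ with weight $\lambda^2 e^{-\lambda}$, i.e.\ a ``shifted'' Laguerre ensemble whose normalization is again a Selberg integral. The ratio of the two Selberg integrals evaluates to a simple rational expression in $n$, producing the coefficient of $t$ in the bound $\Pr(\lambda_n \le t) \le f_n(0)\, t$. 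Specializing to $t=x^2$ then yields the lemma.

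The main obstacle is the Selberg integral bookkeeping needed to land exactly on the factor $n/2$; this is where the precise variance convention for a complex Gaussian enters (the constant changes by a fixed factor under the two common normalizations). An alternative route, avoiding Selberg, is to condition on the first $n-1$ columns of $G$ and control $\sigma_n(G)$ via a distance-to-subspace bound for the last column, then apply a union bound over choices of the distinguished column; I would expect this approach to reproduce the correct $x^2$ power (each distance is $|z|$ for a complex standard normal $z$, giving $\Pr(|z|\le x) = O(x^2)$), but to lose factors of $n$ compared to the sharp constant obtained from the exact density.
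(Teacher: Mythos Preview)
The paper does not prove this lemma at all; it is stated as a direct citation of Theorem~3.2 from \cite{b33}, with no accompanying argument. There is therefore nothing in the paper to compare your approach against.

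Your route via the LUE joint density is sound, but you are working harder than necessary. After writing the ordered density $C_n\prod_{i<j}(\lambda_i-\lambda_j)^2\prod_i e^{-\lambda_i}$ and marginalizing over $\lambda_1,\dots,\lambda_{n-1}$, the substitution $\mu_j=\lambda_j-t$ for $j<n$ shows that $f_n(t)=f_n(0)\,e^{-nt}$ \emph{exactly}: the smallest eigenvalue of a square complex Wishart matrix is genuinely exponential, not merely decreasing. Normalization $\int_0^\infty f_n(0)e^{-nt}\,dt=1$ then gives $f_n(0)=n$ immediately, so no Selberg bookkeeping is needed. One obtains $\Pr(\lambda_n\le t)=1-e^{-nt}\le nt$, hence $\Pr(\sigma_n(G)\le x)\le nx^2$.

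Your caution about the normalization is warranted: under the convention of Definition~\ref{a16} (real and imaginary parts of variance $1/2$, so $\E|G_{ij}|^2=1$) the exact bound is $nx^2$, whereas the constant $n/2$ in the stated lemma corresponds to real and imaginary parts each of variance $1$. The factor of two is immaterial for the paper's analysis of \alg{DEFLATE}, but it is the reason your Selberg computation would not land on $n/2$ without first pinning down which convention \cite{b33} uses.
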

\begin{lemma}[Lemma 2.2 from \cite{b34}]\label{a33}
    \[\Pr\pare{\magn G>(2\sqrt2+t)\sqrt n}\le2e^{-nt^2}.\]
\end{lemma}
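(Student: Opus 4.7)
The result is a standard large-deviation bound for the operator norm of a Gaussian matrix, and the plan is to prove it by combining an upper bound on $\E\magn G$ with Gaussian Lipschitz concentration of $\magn G$ around its mean.

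First I would observe that, viewed as a function of the $2n^2$ real coordinates of $G$ (the real and imaginary parts of its entries), $\magn{\cdot}$ is $1$-Lipschitz with respect to the Euclidean norm, since
\[\abs{\magn G-\magn{G'}}\le\magn{G-G'}\le\magn{G-G'}_F.\]
Applying the Borell--Sudakov--Tsirelson Gaussian concentration inequality to this Lipschitz function, with per-coordinate variance determined by the paper's complex Gaussian convention, yields
\[\Pr\pare{\magn G>\E\magn G+s}\le e^{-s^2/(2\sigma^2)},\]
and choosing $s$ proportional to $t\sqrt n$ converts this into an $e^{-nt^2}$ tail.

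Next I would bound $\E\magn G\le 2\sqrt{2n}$ by a Gaussian comparison argument: apply Gordon's inequality (or Slepian's lemma after reducing to the real case) to the Gaussian process $(u,v)\mapsto\Re(v^*Gu)$ indexed by the product of complex unit spheres $\mathcal{S}^{n-1}\times\mathcal{S}^{n-1}$, comparing against the separable process $(u,v)\mapsto\inr gu+\inr hv$ for independent Gaussian vectors $g,h$ of matching dimension. A slightly weaker bound with a worse constant can also be extracted by an $\eps$-net argument: a $1/2$-net of $\mathcal{S}^{n-1}\subset\C^n$ has cardinality at most $5^{2n}$, each $v^*Gu$ for a fixed pair in the net is a scalar complex Gaussian whose squared modulus is exponential, and a union bound paired with the standard $(1-1/2)^{-2}=4$ overhead to return to the full sphere yields a tail of the right order, but with a constant larger than $2\sqrt 2$.

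Combining the mean bound with concentration and simplifying the constants gives
\[\Pr\pare{\magn G>(2\sqrt 2+t)\sqrt n}\le 2e^{-nt^2},\]
where the prefactor of $2$ is slack absorbing any one-sided-to-two-sided passage. The main obstacle is obtaining the sharp constant $2\sqrt 2$: the Lipschitz concentration step is essentially automatic, so the real work sits in the Gordon-type comparison used for $\E\magn G$, together with care in matching the normalization of the complex Gaussian entries between the proof and the claimed bound.
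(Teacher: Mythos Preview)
The paper does not prove this lemma at all: it is quoted verbatim as ``Lemma 2.2 from \cite{b34}'' and used as a black box in the analysis of \alg{DEFLATE}. So there is no proof in the paper to compare your proposal against.

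That said, your outline is the standard argument and is essentially correct for the paper's normalization (complex entries with real and imaginary parts of variance $1/2$). Under this convention the map from the $2n^2$ standard real Gaussians to $\magn G$ is $(1/\sqrt2)$-Lipschitz, so Gaussian concentration gives $\Pr(\magn G>\E\magn G+s)\le e^{-s^2}$, which becomes $e^{-nt^2}$ at $s=t\sqrt n$. For the mean, Gordon's comparison already gives $\E\magn G\le 2\sqrt n$ for a square complex Ginibre matrix with unit-variance entries, so the constant $2\sqrt2$ in the statement is slack; you do not need to work hard to hit it, and the factor of $2$ in front of the exponential is likewise unnecessary slack (the one-sided concentration bound already suffices). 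The only thing to be careful about is tracking the $1/\sqrt2$ scaling consistently through both the Lipschitz constant and the Gordon step.
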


\begin{theorem}
\label{a30}
\alg{DEFLATE} has the advertised guarantee when run with precision
\[\meps\le\udeflate(\beta,n)=\frac\beta{4\qrmu+2\nmu+2\mmu}.\]
Furthermore, it uses only
\[T_{\alg{MM}}(n)+T_{\alg{QR}}(n)+T_{\alg N}\cdot n^2.\]
\flops.
\end{theorem}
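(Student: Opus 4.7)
The plan organizes the analysis around three pieces: (i) propagating forward errors through the subroutines, (ii) applying Lemmas \ref{a32} and \ref{a33} to control a single truly-Gaussian matrix $G$ coupled to $\wt G$, and (iii) a perturbation argument that produces a linear-in-$\beta$ output bound rather than the naive $\sqrt\beta$ one. Let $M_0 := AG$ denote the idealized product, and write $A = U_A \Sigma V_A^*$ in compact SVD form so that $U_A \in \C^{n \times r}$ is a semi-unitary basis for $\range(A)$.

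For (i), Definition \ref{a16} gives $\magn{\wt G - G} \le \nmu \meps \magn{G}$ via the Frobenius-to-operator bound. Combining with $\magn{\wt A - A} \le \beta$ and the $\alg{MM}$ guarantee (\ref{a13}) yields
\[\magn{\wt M - M_0} \le \beta\magn{G} + \meps(\mmu + \nmu)(\magn{A} + \beta)\magn{G}\]
up to a lower-order $\meps^2$ term. The $\alg{QR}$ guarantee (\ref{a2}) then provides a unitary $Q'$ and a matrix $M'$ with $(Q')^* M' = R$ upper triangular, $\magn{\wt Q - Q'} \le \qrmu\meps$, and $\magn{M' - \wt M} \le \qrmu \meps \magn{\wt M}$. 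Substituting $\meps \le \beta/(4\qrmu + 2\nmu + 2\mmu)$ collapses the $\meps$-dependent terms into a small multiple of $\beta$, giving $\magn{M' - M_0} \le C\beta(\sigma_1(A) + 1)\magn{G}$ for a small absolute constant $C$.

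For (ii), Lemma \ref{a33} gives $\magn{G} \le (2\sqrt 2 + t)\sqrt n$ with probability $1 - 2e^{-nt^2}$. By unitary invariance of the complex Gaussian distribution, $V_A^* G_{:,1:r}$ is an $r \times r$ matrix of i.i.d.\ complex Gaussians, so Lemma \ref{a32} at size $r$ yields $\sigma_r(V_A^* G_{:,1:r}) \ge x$ with probability $1 - (r/2)x^2$. Consequently $\sigma_r(M_0[:,1:r]) \ge \sigma_r(A)\cdot x$, and the algorithm's input hypothesis on $\beta$ combined with Weyl's inequality preserves this lower bound up to a constant factor when passing to $\sigma_r(M'[:,1:r])$.

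For (iii) --- the heart of the proof --- set $Q'_r := Q'[:,1:r]$ and observe that $M'[:,1:r] = Q'_r R_{11}$, where $R_{11} \in \C^{r \times r}$ is the top-left block of the upper-triangular $R$, so $\sigma_r(R_{11})$ is bounded below by the previous step. Let $\Pi := U_A U_A^*$ and $\Pi^\perp := I - \Pi$. The crucial identity $\Pi^\perp M_0 = 0$ yields two facts about $Q'_r$: first, $\magn{\Pi^\perp Q'_r} = \magn{\Pi^\perp(M'[:,1:r] - M_0[:,1:r]) R_{11}^{-1}}$ is linear in $\magn{M' - M_0}$; second, since $(Q'_r)^* Q'_r = I$, one has $(U_A^* Q'_r)^*(U_A^* Q'_r) = I - (\Pi^\perp Q'_r)^*(\Pi^\perp Q'_r)$, so $U_A^* Q'_r$ is \emph{quadratically} close to a unitary matrix. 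Writing its polar decomposition $U_A^* Q'_r = WH$ gives $\magn{H - I} \le \magn{\Pi^\perp Q'_r}^2$; taking the semi-unitary basis $U := U_A W$ (which still spans $\range(A)$) then yields
\[\magn{Q'_r - U} \le \magn{U_A(U_A^* Q'_r - W)} + \magn{\Pi^\perp Q'_r} = \magn{H - I} + \magn{\Pi^\perp Q'_r},\]
which is linear in $\magn{\Pi^\perp Q'_r}$, hence linear in $\beta$. A triangle step with $\magn{\wt Q - Q'} \le \qrmu \meps$ delivers the $\magn{\wt U - U}$ bound; assembling constants produces the factor $6$. The runtime is immediate from the algorithm: $n^2$ invocations of $\normal$ plus one call each to $\alg{MM}$ and $\alg{QR}$. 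The main obstacle is exactly this last step --- the naive route, which uses only the near-unitarity of $U_A^* Q'_r$ and converts a quadratic bound to a linear one via a square root, doubles the bit requirement; simultaneously exploiting that $\Pi^\perp Q'_r$ itself is small (from the rank-$r$ structure of $M_0$, not merely from $Q'_r$ being orthonormal) is what buys the tight linear-in-$\beta$ bound over \cite{b0}.
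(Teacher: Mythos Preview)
Your proposal is correct and shares the paper's central insight: bounding the perpendicular component $\magn{(U^\perp)^*Q_1}$ (your $\magn{\Pi^\perp Q'_r}$) linearly in $\beta$ via the identity $(U^\perp)^*AG=0$, which is exactly what removes the square-root loss over \cite{b0}. The error-propagation and random-matrix steps (i)--(ii) match the paper closely.

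The mechanics of step (iii) differ somewhat. The paper works throughout with the computed $Q$ (only approximately unitary), introduces the auxiliary matrix $C=R_{11}(V^*G_1)^{-1}\Sigma^{-1}$, establishes $\magn{U-Q_1C}\le m\beta$, and then separately squeezes the singular values of $C$ and of $U^*Q_1$ to reach $\inf_W\magn{U-Q_1W}\le 6m\beta$. You instead pass to the exactly unitary $Q'$ from the $\alg{QR}$ guarantee, so that $(Q'_r)^*Q'_r=I$ holds on the nose; this yields the clean Pythagorean relation $(U_A^*Q'_r)^*(U_A^*Q'_r)=I-(\Pi^\perp Q'_r)^*(\Pi^\perp Q'_r)$ and lets a single polar decomposition produce the target $U$. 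Your route is arguably tidier---it avoids the intermediate $C$ and its singular-value analysis---while the paper's route has the minor advantage of tracking the computed $Q$ directly without a final triangle step. Both arrive at the same linear-in-$\beta$ bound; the exact constant $6$ will require you to chase the bookkeeping carefully, but the orders clearly match.
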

\begin{proof}
The runtime is clear the algorithm consists of $n^2$ calls to $\normal$, one call to $\alg{MM}$, and one call to $\alg{QR}$ and no additional work.
Throughout the proof we use $(\cdot)_1$ to denote the first $r$ columns of a matrix and use $(\cdot)_{11}$ to denote the upper-left $r\times r$ submatrix. In particular, the output of the algorithm is $Q_1$.
By assumption, we may express $A=U\Sigma V^*$ and $\wt A=A+E$ for $U,V\in\C^{n\times r}$, $\Sigma\in\C^{r\times r}$, and $\magn E\le\beta$.
Our goal is to bound $\inf_{\text{unitary }W}\magn{U-Q_1W}$.
Let $G_{ij}$ be the Gaussian random variable coupled with $\wt G_{ij}$ such that $|\wt G_{ij}-G_{ij}|\le\abs{G_{ij}}c_{\alg N}\meps$. Then
\(\rmagn{\wt G-G}\le\magn G\sqrt nc_{\alg N}\meps.\)
Let $\wt G=G+E_G$. Then for some $\magn F\le\rmagn{\wt A}\rmagn{\wt G}\mmu\meps$,
\begin{align*}
 M
  &=\alg{MM}(\wt A,\wt G)
\\&=\wt A\cdot\wt G+F
\\&=(U\Sigma V^*+E)\cdot(G+E_G)+F
\\&=U\Sigma V^*G+U\Sigma V^*E_G+E\cdot(G+E_G)+F.
\end{align*}
Set $X=U\Sigma V^*E_G+E\cdot(G+E_G)+F$.
Definition \ref{a11} guarantees that for some $\magn{E_M}\le\magn M\qrmu\meps$ and $\magn{E_Q}\le\qrmu\meps$ that
\begin{align*}
&(M+E_M)=(Q+E_Q)R
\\\implies &M=QR+E_QR-E_M
\\\implies &U\Sigma V^*G=QR+E_QR-E_M-X
\\\implies &U\Sigma V^*G_1=Q\bmat{R_{11}\\0}+E_Q\bmat{R_{11}\\0}-(E_M)_1-X_1
\\\implies &U\Sigma V^*G_1=Q_1R_{11}+(E_Q)_1R_{11}-(E_M)_1-X_1
\\\implies &U=Q_1R_{11}(V^*G_1)^{-1}\Sigma^{-1}+\sqbrac{(E_Q)_1R_{11}-(E_M)_1-X_1}(V^*G_1)^{-1}\Sigma^{-1}.
\end{align*}
We bound the first factor of the second term by
\begin{align*}
\magn{(E_Q)_1R_{11}}
  &\le\magn{E_Q}\magn{R}
\\&\le\qrmu\meps\cdot\magn{M+E_M}
\\&\le\qrmu\meps\cdot(1+\qrmu\meps)\magn M
\\&\le\qrmu\meps\cdot(1+\qrmu\meps)(1+\mmu\meps)\rmagn{\wt A}\rmagn{\wt G}
\\&\le\qrmu\meps\cdot(1+\qrmu\meps)(1+\mmu\meps)(\rmagn{\Sigma}+\beta)
(1+\nmu\meps)\magn G
\\&\le2\magn G(\rmagn{\Sigma}+\beta)\qrmu\meps
\\
\magn{(E_M)_1}
  &\le\magn{E_M}
\\&\le\qrmu\meps\cdot(1+\mmu\meps)\rmagn{\wt A}\rmagn{\wt G}
\\&\le2\magn G(\rmagn{\Sigma}+\beta)\qrmu\meps
\\
\magn{X_1}
  &\le\magn{X}
\\&\le\magn\Sigma\nmu\meps\cdot\magn G+\beta\cdot\rmagn{\wt G}+\rmagn{\wt A}\rmagn{\wt G}\mmu\meps
\\&\le\magn\Sigma\nmu\meps\cdot\magn G+\beta\cdot(1+\nmu\meps)\rmagn{G}+(\magn\Sigma+\beta)(1+\nmu\meps)\rmagn{G}\mmu\meps
\\&=  \magn G(\magn\Sigma+\beta)\sqbrac{(\nmu+\mmu)+\nmu\mmu\meps}\meps+\beta\rmagn{G}
\\&\le2(\nmu+\mmu)\magn G(\magn\Sigma+\beta)\meps+\beta\rmagn{G}
\end{align*}
Altogether, for $C=R_{11}(V^*G_1)^{-1}\Sigma^{-1}$ this gives
\spliteq{\label{a34}}{
\magn{U-Q_1C}
  &\le\magn{(E_Q)_1R_{11}-(E_M)_1-X_1}\cdot\magn{(V^*G_1)^{-1}\Sigma^{-1}}
\\&\le\sqbrac{\pare{4\qrmu+2\nmu+2\mmu}\pare{\magn\Sigma+\beta}\meps+\beta}\magn G\cdot\magn{(V^*G_1)^{-1}\Sigma^{-1}}
\\&\le\sqbrac{\pare{4\qrmu+2\nmu+2\mmu}\pare{\frac{\magn A+\beta}{\sigma_r(A)}}\meps+\frac\beta{\sigma_r(A)}}\cdot\magn G\magn{(V^*G_1)^{-1}}
\\&\le\sqbrac{\frac{\magn A+\beta+1}{\sigma_r(A)}}\cdot\magn G\magn{(V^*G_1)^{-1}}\cdot\beta
\\&=:m\cdot\beta
}
If $C$ was unitary we'd be nearly done. Unfortunately we have no such guarantee.
Instead we derive three different inequalities from (\ref{a34}).
\begin{align*}
(\ref{a34})
  \implies&\magn{U-Q_1C}\le m\beta
\\\implies&\big|\magn{Ux}-\magn{Q_1Cx}\big|\le m\beta\magn x
\\\implies&(1-m\beta)\magn x\le\magn{Q_1Cx}\le(1+m\beta)\magn x
\\\implies&\frac{1-m\beta}{1+\qrmu\meps}\le\frac{\magn{Cx}}{\magn x}\le\frac{1+m\beta}{1-\qrmu\meps}.
\end{align*}
In in particular, that implies bounds on the singular values of $C$, which we now apply.
\begin{align*}
(\ref{a34})
  \implies&\magn{U^*U-U^*Q_1C}\le m\beta
\\\implies&\big|\magn{x}-\magn{U^*Q_1Cx}\big|\le m\beta\magn x
\\\implies&(1-m\beta)\magn x\le\magn{U^*Q_1Cx}\le(1+m\beta)\magn x
\\\implies&\frac{1-m\beta}{\sigma_1(C)}\magn x\le\magn{U^*Q_1x}\le\frac{1+m\beta}{\sigma_r(C)}\magn x
\\\implies&\pare{1-\qrmu\meps}\frac{1-m\beta}{1+m\beta}\magn x\le\magn{U^*Q_1x}\le\frac{1+m\beta}{1-m\beta}\pare{1+\qrmu\meps}\magn x.
\\
(\ref{a34})
  \implies&\magn{(U^\perp)^*U-(U^\perp)^*Q_1C}\le m\beta
\\\implies&\magn{(U^\perp)^*Q_1Cx}\le m\beta\magn x
\\\implies&\magn{(U^\perp)^*Q_1x}\le\frac{1}{\sigma_r(C)}m\beta\magn x
\\\implies&\magn{(U^\perp)^*Q_1x}\le\frac{1+\qrmu\meps}{1-m\beta}m\beta\magn x
\end{align*}
Now consider the quantity we need to bound and apply the two above inequalities
\begin{align*}
    \inf_{\text{unitary }W}\magn{U-Q_1W}
      &\le\inf_{\text{unitary }W}\pare{\magn{I-U^*Q_1W}+\magn{(U^\perp)^*Q_1W}}
    \\&=\inf_{\text{unitary }W}\magn{W^*-U^*Q_1}+\magn{(U^\perp)^*Q_1}
    \\&=\max(\sigma_1(U^*Q_1)-1,1-\sigma_r(U^*Q_1))+\magn{(U^\perp)^*Q_1}
    \\&=\max\pare{\frac{1+m\beta}{1-m\beta}\pare{1+\qrmu\meps}-1,1-\pare{1-\qrmu\meps}\frac{1-m\beta}{1+m\beta}}+\frac{1+\qrmu\meps}{1-m\beta}m\beta
    \\&\le4m\beta+2m\beta
    \\&=  6m\beta.
\end{align*}
For the last inequality, we assumed that $m\beta\le0.2$.
We end with a tail estimate on $m$.
Note by rotational invariance that $V^*G_1$ is an $r\times r$ matrix of complex Gaussian entries. So applying Lemmas \ref{a32} and \ref{a33} we have by union bound that
\[\Pr\pare{m\ge\frac{\sigma_1(A)+2}{\sigma_r(A)}\cdot\frac{(2\sqrt2+t)\sqrt n}{x}}\le e^{-nt^2}+\frac r2x^2.\]
Note the complementary event implies $m\beta\le0.2$ by the requirement of $\beta$, so the desired bound holds.
\end{proof}

\begin{remark}
One may notice all we really need for spectral bisection is a basis of an invariant subspace of $A$ and wonder why we're bothering with matrix sign and deflate.
After all, we have ready-made methods for computing bases of invariant subspaces: Lanczos and subspace iteration.
For instance, consider \[X_0=\text{random}\in\C^{n\times (n/2)}\quad\quad\quad[X_{k+1},R_k]=\alg{QR}(AX_k).\]
Then $X_k$ converges to an invariant subspace of dimension $n/2$.
The issue is the rate of convergence. If $\lambda_1>\lambda_2>\cdots>\lambda_n\ge 0$ are the eigenvalues of $A$, then we only converge after
\[
\frac{\log(1/\eps)}{
\log\pare{\lambda_{n/2+1}/\lambda_{n/2}}
}
\approx
\frac{\log(1/\eps)}{
1-{\lambda_{n/2+1}/\lambda_{n/2}}
}
\] iterations. This is a \textit{polynomial} dependence on the relative eigenvalue gap, which is totally unacceptable. Lanczos offers only a square root improvement over this.
Since spectral projectors are square with a functional calculus, we have `repeated squaring' flavor algorithms which 
simulate computing something akin to $X_{2^k}$ or even $X_{3^k}$ in just $k$ iterations.
\end{remark}

\section{Spectral bisection}
\label{a7}
We're now ready to describe our spectral bisection algorithm, Algorithm \ref{a35} \thealg. As mentioned in the introduction, this work makes three main changes to the version of spectral bisection appearing in \cite{b0}. One is to use Newton-Schulz for sign estimation. The other two are highlighted in Remarks \ref{a8} and \ref{a9}.
\newcommand{\signp}{\delta_{\alg{SIGN}}}
\newcommand{\gbeta}{\frac{\rho\cdot\eps^2}{\ell^2\cdot n^3}}
\begin{algorithm}
\caption{$[U,D]=\thealg(A,R_0,R,\eps,\ell,\rho)$}
\label{a35}
\begin{algorithmic}[1]
    \Require Hermitian matrix $A$ with floating point entries. Initial size $R_0\ge\magn A$. Window size $R\ge\magn A$. Target accuracy $\eps>0$.
    Integer $\ell\ge20$.
    Failure parameter $\rho>0$.
    \Ensure
    With some probability, all singular values of $U$ lie in $[1-\eps/3,1+\eps/3]$ and $\magn{UDU^*-A}\le\eps\cdot(R_0+R)$.
    \If{$n=1$}
        \State\Return $(\bmat1,\,a_{11})$
    \EndIf
    \If{$R\le\eps R_0$}
        \State\Return $(I_{n\times n},\,0)$
    \EndIf
    \State$\eps'\gets(1-1/\ell)\eps$
        \label{a36}
    \State$R'\gets(1/2+2/\ell)R$
        \label{a37}
    \State
    \[\delta\gets\frac34\cdot\frac{\rho^{1/2}\eta}{n}\cdot\frac13\cdot\frac1{12\sqrt2+6\sqrt{\log(4/\rho)/n}}\text{ where }\eta=\frac{\eps'}{5\ell}\]
        \label{a38}
    \State$c=\unif(R/\ell)$
        \label{a39}
    \State$A_{\alg{shift}}\gets A-cI$
        \label{a40}
    \State$B=\alg{SIGN}(A_{\alg{shift}},\delta, 2R)$
        \label{a41}
    \State$P_\pm\gets(I\pm B)/2$
        \label{a42}
    \State$k_\pm=\alg{round}(\tr P_\pm)$
        \label{a43}
    \If{$k_\pm=n$}
    \Comment{Note $k_++k_-=n$ so this condition is met for at most one of $k_+$ and $k_-$.}
    \State $A_\pm\gets A\mp(R/2) I$
    \State$(U,D)=\thealg(A_\pm,R_0,R',\eps',\ell+1,\rho)$
    \State\Return$(U,D\pm(R/2)I)$
    \EndIf
    \State$Q_\pm=\alg{DEFLATE}(P_\pm,\,k_\pm)$
        \label{a44}
    \State$C_\pm=\alg{MM}(\alg{MM}(Q_\pm^*, A),Q_\pm)$
        \label{a45}
    \State$A_\pm\gets C_\pm\mp(R/2)I$
        \label{a46}
    \State$(U_\pm,D_\pm)=\thealg(A_\pm,R_0,R',\eps',\ell+1,\rho)$
        \label{a47}
    \State$W_\pm=\alg{MM}(Q_\pm,U_\pm)$
        \label{a48}
    \State
    \Return$\pare{\bmat{W_+&W_-},\bmat{D_++(R/2)I\\&D_--(R/2)I}}$\label{a49}
\end{algorithmic}
\end{algorithm}
In the pseudocode, the symbol $\leftarrow$ is used to denote floating-point assignment. That is, $x\gets r$ means $x=\alg{fl}(r)$ is the floating point number closest to $r$.
The ``root call'' to \thealg\,is
\[
[U,D]=\alg{EIGH}(A,\eps,\theta)
=
\thealg(A,\magn A,\magn A,\eps,\ceil{\lg(1/\eps)}+5,\theta/4n).
\]
\begin{remark}(Recursive parameters)
\label{a8}
\cite{b0} passes $0.8\cdot\eps$ in for $\eps$ in the recursive calls. For a computation tree of $O(\log(n))$ depth, this makes the $\eps$ seen by a deep node $\eps/\poly(n)$. This contributes to the precision $\meps$ required by that node, which itself is is a polynomial in $\eps$.
Algorithm \ref{a35} passes in $(1-1/\ell)\eps$ and increments $\ell$.  Instead of $0.8^k$, one ends up with a telescoping product resulting in only a constant factor smaller $\eps$ seen at the leaves.
\end{remark}
\begin{remark}(Shattering/Split points/Base case)
\label{a9}
\cite{b0} uses binary search to find a good value of the split point $c'$ that ensures $k_\pm$ are both at least $n/5$. This ensures that the sub-problems are constant factors smaller than the original, guaranteeing that recursion halts at depth $\log_{5/4}(n)$.
\cite{b19} uses the median of the diagonal entries to find a split point that ensures $k_\pm\ge1$. In the worst case, this leads to a depth of $n$.
We pick our split points $c'$ randomly, as suggested by \cite{b20}.
In order to avoid having to perturb the input matrix, the avoidance of binary search and adoption of random split points are both necessary.
If we picked $c'$ deterministically, an adversarial input may place an eigenvalue exactly at our selection preventing $\alg{SIGN}$ from converging.
We also cannot assume that our eigenvalues are spaced out. In particular, there may not even exist a split point with $k_\pm\ge n/5$ so binary searching for one would fail. The random split points are picked close to the midpoint, so they reduce the range of the spectrum by almost a factor of two each time.
\end{remark}

There are four quantities we need to bound for $\alg{EIGH}$. 1. Residual error 2. Success probability 3. Precision requirement 4. Runtime.
We do so in two parts. First, we analyze those for quantities within each call to $\thealg$ treating the recursive calls as oracle queries; we call this the ``local'' guarantee.
In particular, applying the local guarantee to the root call ensures the residual error of $\alg{EIG}$ satisfies desired bound.
For the other three quantities, we need a global analysis. In particular, we apply union bound to obtain the probability every call to $\thealg$ succeeds, sum the relevant runtimes, and take the minimum over all the precisions required.

\begin{lemma}(Local guarantee)
\label{a50}
Consider a call to \thealg\, with inputs $(A,R,\eps,\ell,\rho)$.
Fix any $w>0$ and set
\[
t=12\sqrt2+6\sqrt{\log(4/\rho)/n}
\quad\And\quad
\beta=\frac{\rho^{1/2}\eta}{3nt}.\]
Let $c'$ be the sample from the real interval $[-R/\ell,R/\ell]$ coupled with $c=\alg{UNIF}(R/\ell)$ in Step \ref{a39} so that $\magn{c-c'}\le(R/\ell)\meps$.
When using precision,
\[
\meps\le\min\pare{
\udeflate(\beta,n),
\usign(\beta/2,2R,2/w),
\frac14\cdot\frac\beta n\cdot\frac wR
}
\]
if one conditions on
\begin{enumerate}
    \item[]Event I: $c'\not\in\Lambda_w(A)$,
    \item[]Event II: Step \ref{a44} succeeds with error at most $\eta$, and
    \item[]Event III: The recursive calls in Step \ref{a47} succeed (i.e. produce $U_\pm,D_\pm$ with small residual),
\end{enumerate}
then the values returned in Step \ref{a49} have the advertised guarantee. Furthermore Event I occurs with probability at least $1-wn/R$ and Event II occurs with probability at least $1-2\rho$.
\end{lemma}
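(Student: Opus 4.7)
The plan is to dispatch the base cases, verify the probability bounds on Events I and II, and then condition on all three events to propagate error through one level of the recursion. The two base cases are immediate: $n=1$ returns the scalar exactly, while $R \le \eps R_0$ returns $(I,0)$ whose residual $\magn A$ is bounded by $R \le \eps R_0 \le \eps(R_0+R)$ using the precondition $R \ge \magn A$.

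For Event I, I would observe that $\Lambda_w(A)\cap\R$ is a union of at most $n$ real intervals of length $2w$ (one around each eigenvalue of the Hermitian $A$), while $c'$ is uniform on an interval of length $2R/\ell$; a direct measure bound, together with the coupling $|c-c'|\le\meps R/\ell$ from Definition \ref{a15}, yields the stated $1-wn/R$. For Event II, the parameters are tailored to Theorem \ref{a30}: since $P_\pm$ is an orthogonal projection we have $\sigma_1(P_\pm)=\sigma_{k_\pm}(P_\pm)=1$, and $\beta=\rho^{1/2}\eta/(3nt)$ with the prescribed $t$ matches exactly the ``convenient choice of parameters'' $x=\sqrt{\rho/r}$ written after Algorithm \ref{a31}, giving error at most $\eta$ per projection with failure probability at most $\rho$ each, hence $2\rho$ by union bound. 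The constraint $\meps\le\udeflate(\beta,n)$ absorbs the difference between $P_\pm$ and its floating-point computation, while $\meps\le\usign(\beta/2,2R,2/w)$ makes SIGN produce $B$ with $\magn{B-\sign(A_{\alg{shift}})}\le\beta/2$; here the $2/w$ is the bound on $\magn{A_{\alg{shift}}^{-1}}\cdot b$ that Event I supplies, since $c'\notin\Lambda_w(A)$ means $\magn{(A-c'I)^{-1}}\le1/w$ and the $\meps$-perturbation from computing $c$ is negligible.

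Conditioning on all three events, I would trace the residual through the algorithm in stages. SIGN gives $\magn{P_\pm-\bar P_\pm}\le\beta$ for the true spectral projectors $\bar P_\pm$; since $n\beta\ll 1/2$, $\tr P_\pm$ rounds correctly to $k_\pm=\rank\bar P_\pm$; DEFLATE (Event II) then gives $\magn{Q_\pm-\bar Q_\pm}\le\eta$ for semi-unitary $\bar Q_\pm$ spanning the true $\pm$-invariant subspaces; the two multiplications in Step \ref{a45} produce $C_\pm$ within $O((\magn A+R)\eta+R\mmu\meps)$ of $\bar Q_\pm^*A\bar Q_\pm$, which is exactly the block of $A$ acting on its $\pm$-eigenspace shifted by $c$; and subtracting $(R/2)I$ gives $A_\pm$ with $\magn{A_\pm}\le R/2+R/\ell+\text{slop}\le R'$, so the recursive precondition on the window size holds. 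By Event III the recursive call returns $(U_\pm,D_\pm)$ with $\magn{U_\pm D_\pm U_\pm^*-A_\pm}\le\eps'(R_0+R')$ and $U_\pm$ near-unitary, and the final assembly $W_\pm=\alg{MM}(Q_\pm,U_\pm)$ plus the block-diagonal $D$ gives a residual against $A$ equal to the sum of the recursive residuals, the $\bar Q_\pm\to Q_\pm$ approximation error, the floating-point slop from the two MMs, and the shift rounding.

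The hard part will be the telescoping bookkeeping. The choices $\eps'=(1-1/\ell)\eps$ and $R'=(1/2+2/\ell)R$ leave exactly $\eps R/\ell$ of ``headroom'' at this level, and one must check that the slop from SIGN, DEFLATE, the two MMs, and the shift rounding all fit inside it; this is what pins down the specific $\eta=\eps'/(5\ell)$ in Step \ref{a38} and the three-way minimum defining the allowed $\meps$. Verifying that each slop term is bounded by its allocated fraction of $\eps R/\ell$, and that $[W_+\ W_-]$ inherits near-unitarity from $[\bar Q_+\ \bar Q_-]$ being semi-unitary together with $U_\pm$ being $\eps'/3$-near-unitary (which forces cross-block inner products $W_+^*W_-$ to stay small), is where the bulk of the computation lies.
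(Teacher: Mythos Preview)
Your proposal is correct and follows essentially the same route as the paper: bound $\magn{P_\pm-\text{true projector}}$ via the \alg{SIGN} guarantee and Lemma~\ref{a24}, verify the \alg{DEFLATE} hypotheses to get $\magn{Q_\pm-V_\pm}\le\eta$, check the recursive window condition $\magn{A_\pm}\le R'$, and then assemble the residual and near-unitarity bounds from Event III. Two small points where the paper differs from your sketch: (i) $P_\pm$ is not literally an orthogonal projection but only $\delta$-close to one, so in the \alg{DEFLATE} hypothesis it is the \emph{true} projector $(I\pm\sign(A-c'I))/2$ that plays the role of the exact rank-$r$ matrix with $\sigma_1=\sigma_r=1$; (ii) for near-unitarity the paper does not track the cross-block products $W_+^*W_-$ as you suggest, but writes $[W_+\ W_-]=[V_+\ V_-]\,\mathrm{diag}(U_+,U_-)+[X_+\ X_-]$ with $[V_+\ V_-]$ exactly unitary and $\magn{X_\pm}\le(1+2\eps')\eta$, then bounds all singular values at once by Weyl's inequality, which is cleaner than handling the off-diagonal block separately.
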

\begin{proof}
Our first task is to quantify $\magn{A_{\alg{shift}}-(A-c'I)}$.
\begin{equation}\label{a51}
\begin{split}
    \magn{A_{\alg{shift}}-(A-c'I)}
      &\le\magn{A_{\alg{shift}}-(A-cI)}+\magn{(A-cI)-(A-c'I)}
    \\&\le\magn{(A-cI)}\meps+\abs{c-c'}
    \\&\le(R+R/\ell)\meps+(R/\ell)\meps
    \\&\le(1+2/\ell)R\meps.
\end{split}
\end{equation}
Our next task is to quantify $\magn{P_\pm-\frac{\sign(A-c'I)\pm I}2}$
\begin{align}
    \magn{P_\pm-\frac{\sign(A-c'I)\pm I}2}
    &=
    \label{a52}
    \magn{P_\pm-\frac{B\pm I}2}
    \\&+
    \label{a53}
    \magn{\frac{B\pm I}2-\frac{\sign(A_{\alg{shift}})\pm I}2}
    \\&+
    \label{a54}
    \magn{\frac{\sign(A_{\alg{shift}})\pm I}2-\frac{\sign(A-c'I)\pm I}2}
\end{align}
We bound each of the three terms starting with (\ref{a54}). Note we may pull out $\frac12$ and the identity terms cancel.
Then Event I implies that we may apply Lemma \ref{a24} to bound it by
\[
(\ref{a54})=\frac12\magn{{\sign(A_{\alg{shift}})}-{\sign(A-c'I)}}\le
\frac12\cdot n\cdot\frac{(1+2/\ell)R\meps}{w-(1+2/\ell)R\meps}.\]
Note incidentally that $\Lambda_w(A)$ is a set of measure at most $2nw$ and $c'$ is sampled from a density bounded by $\ell/2R$ so the probability Event I fails is at most $2nw\cdot\ell/2R=nw/R$.
Next we consider (\ref{a53}). Note again we may pull out $\frac12$ and the identity terms cancel, so
\[
(\ref{a53})=\frac12\magn{\alg{SIGN}(A_{\alg{shift}},\delta,2R)-\sign(A_{\alg{shift}})}.
\]
In order to apply Theorem \ref{a29}, we must verify $\meps$ is small enough given the inputs.
First, (\ref{a51}) implies
\[\magn{A_{\alg{shift}}}
\le\magn{A_{\alg{shift}}-(A-c'I)}+\magn{A-c'I}
\le\sqbrac{(1+2/\ell)R\meps}+\sqbrac{(1+1/\ell)R}\le 2R.
\]
Second, Event I means $c'\not\in\Lambda_w(A)$ so
\[
\magn{A_{\alg{shift}}^{-1}}
\le
\frac1{\frac1{\magn{(A-c'I)^{-1}}}-\magn{A_{\alg{shift}} - (A-c'I)}}
\le\frac1{w-(1+2/\ell)R\meps}
\le\frac2{w}.
\]
Our selection of $\meps$ is bounded by \(\meps_{\alg{SIGN}}(\delta,2R,2/w)\), so Theorem \ref{a29} bounds $(\ref{a53})\le\delta/2$.
Finally
\[
(\ref{a52})
\le\frac{\magn B+1}2\meps
\le\frac{(1+\delta)+1}2\meps
\le(1+\delta/2)\meps.\]
Summing (\ref{a52})$+$(\ref{a53})$+$(\ref{a54}) gives
\[\magn{P_\pm-\frac{\sign(A-c'I)\pm I}2}
\le\sqbrac{\frac n2\frac{(1+2/\ell)\frac Rw}{1-(1+2/\ell)\frac Rw\meps}+1+\frac\delta2}\meps+\frac\delta2
\le\delta
\]
since $\meps\le\frac14\cdot\frac\beta n\cdot\frac wR$.
Note that $\frac{\sign(A-c'I)\pm I}2$ are the true spectral projectors into the the parts of the spectrum of $A$ above and below $c'$.
Since $\beta+n\meps\ll1/2$, the estimates $k_\pm$ are the exact ranks of $\frac{\sign(A-cI)\pm I}2$. Additionally, we have $\meps\le\udeflate(n,\beta)$.
These are exactly the conditions for $\alg{DEFLATE}$ to succeed with probability $1-\rho$. Applying union bound gives the probability both calls to $\alg{DEFLATE}$ succeed. Since we are conditioning on Event II, we have that
\begin{equation}
\label{a55}
\magn{Q_\pm-V_\pm}\le\eta\And\magn{Q_\pm}\le1+\qrmu\meps
\end{equation}
for some choice of $V_\pm$ having orthonormal columns with \(\sign\pare{A-c'I}=V_+V_+^*-V_-V_-^*.\)
We now show the recursive calls to \thealg\, in Step \ref{a47} satisfy the input requirements\----namely, we must ensure $R'\ge\magn{A_\pm}$.
\begin{align}
    \magn{A_\pm}
      &\le\magn{V_\pm^*\pare{A\mp\frac R2I}V_\pm}\label{a56}
    \\&+  \magn{A_\pm-V_\pm^*\pare{A\mp\frac R2I}V_\pm}\label{a57}.
\end{align}
Since $V_\pm$ are the bases for the upper and lower parts of the spectrum of $A$, the spectrums of $V_\pm AV_\pm$ are contained in $[c',R]$ and $[-R,c']$ respectively, and so the spectrums of $V_\pm\pare{A\mp\frac R2I}V_\pm$ are contained in $[c-R/2,R/2]$ and $[-R/2,c+R/2]$ respectively, both of which are contained in $[-(1/2+1/\ell)R, (1/2+1/\ell)R]$ since $\abs c\le R/\ell$. This results in (\ref{a56})$\le(1/2+1/\ell)R$. Bounding (\ref{a57}) is more involved.
\begin{align}
\magn{A_\pm-V_\pm^*\pare{A\mp\frac R2I}V_\pm}
  &=  \magn{A_\pm-\pare{C_\pm\mp\frac R2I}}+\magn{\pare{C_\pm\mp\frac R2I}-V_\pm^*\pare{A\mp\frac R2I}V_\pm}
\\&\le\pare{\magn{C_\pm}+\frac R2}\meps+\magn{C_\pm-V_\pm^*AV_\pm}
\\&\le\pare{\magn{C_\pm}+\frac R2}\meps
+\magn{C_\pm-Q_\pm^*AQ_\pm}
+\magn{Q_\pm^*AQ_\pm-V_\pm^*AV_\pm}\label{a58}.
\end{align}
The first term of (\ref{a58}) can be bounded using
\begin{equation}
\label{a59}    
\magn{C_\pm}\le\magn{C_\pm-Q_\pm^*AQ_\pm}+\magn{Q_\pm^*AQ_\pm-V_\pm^*AV_\pm}+\magn{V_\pm^*A V_\pm}.
\end{equation}
Then both the first term of (\ref{a59}) and second term of (\ref{a58}) can be bounded by using the guarantee for $\alg{MM}$ twice,
\begin{equation}
\begin{split}
\magn{C_\pm-Q_\pm^*AQ_\pm}
  &\le\mmu\magn A\magn{Q_\pm}^2(2+\mmu\meps)\cdot\meps
\\&\le\mmu\cdot R\cdot\pare{1+\qrmu\meps}^2\cdot(2+\mmu\meps)\cdot\meps
\\&\le3R\mmu\meps.
\end{split}
\end{equation}
Then both the second term of (\ref{a59}) and third term of (\ref{a58}) can be bounded using (\ref{a55})
\begin{equation}
\begin{split}
\magn{Q_\pm^*AQ_\pm-V_\pm AV_\pm}
  &=  \magn{(V_\pm+\eta E'_\pm)^*A(V_\pm+\eta E'_\pm)-V_\pm AV_\pm}
\\&\le\magn{V_\pm^*AE'_\pm\eta+E'_\pm AV_\pm\eta+E'_\pm AE'_\pm\eta^2}
\\&\le  (2+\eta)R\eta.
\end{split}
\end{equation}
Summing these estimates and rearranging gives
\spliteq{\label{a60}}{
(\ref{a57})\le
(\ref{a58})
  &\le\sqbrac{(1+\meps)\cdot3\mmu+(3/2)}R\meps+(1+\meps)\cdot(2+\eta)R\eta
\\&\le(2+2\eta)R\eta
}
and consequently 
\[\magn{A_\pm}\le(\ref{a56})+(\ref{a57})\le\pare{\frac12+\frac1\ell+(2+\eta)\eta}R\le\pare{\frac12+\frac2\ell}(1-\meps)R\le R'.\]
Let's now show that $\bmat{W_+&W_-}$ is nearly unitary.
By (\ref{a55}), we may express $Q_\pm=V_\pm+\eta E_\pm'$ for some $\magn{E_\pm'}\le1$.
By Definition \ref{a12}, we may express
\begin{equation}\label{a61}
\begin{split}
    W_\pm
  &=Q_\pm U_\pm+\magn{Q_\pm}\magn{U_\pm}\mmu \meps E
\\&=V_\pm U_\pm+\eta E_\pm'U_\pm+\magn{Q_\pm}\magn{U_\pm}\mmu \meps E
\end{split}
\end{equation}
for some $\magn E\le1$.
Set $X_\pm=W_\pm-V_\pm U_\pm$ and note we have the estimate
\begin{equation}\label{a62}
\begin{split}
    \magn{X_\pm}
      &\le\eta\magn{U_\pm}+\magn{Q_\pm}\magn{U_\pm}\mmu\meps
    \\&\le\eta(1+\eps')+(1+\qrmu\meps)(1+\eps')\mmu\meps
    \\&\le\eta(1+2\eps')
\end{split}
\end{equation}
By construction, we have
\[\bmat{W_+&W_-}=\bmat{V_+&V_-}\bmat{U_+\\&U_-}+\bmat{X_+&X_-}.\]
Note that $\bmat{V_+&V_-}$ is a unitary matrix and that Event III guarantees the singular values of $U_\pm$ lie in $[1-\eps'/3,1+\eps'/3]$. So the singular values of $\bmat{W_+&W_-}$ satisfy
\[
(1-\eps'/3)-\sqrt2\max_\pm\magn{X_\pm}
\le
\sigma_j\pare{\bmat{W_+&W_-}}
\le
(1+\eps'/3)+\sqrt2\max_\pm\magn{X_\pm}.
\]
Finally, applying (\ref{a62}) gives
\begin{align*}
      \frac{\eps'}3+\sqrt2\max_\pm\magn{X_\pm}
  &=  \frac{\eps'}3+\sqrt2\eta(1+2\eps')
\\&=  \pare{1+{3\sqrt2}\cdot\frac{\eta}{\eps'}\cdot(1+2\eps')}\frac{\eps'}3
\\&\le\pare{1+\frac1\ell}\frac{\eps'}3
\\&\le\pare{1+\frac1\ell}(1+\meps)\pare{1-\frac1\ell}\frac{\eps}3
\\&\le\eps/3
\end{align*}
so each singular value of $\bmat{W_+&W_-}$ is in the interval $[1-\eps/3,1+\eps/3]$ as required. Our final task is showing our output has sufficiently small residual.
Let $G_\pm=D_\pm\pm\frac R2I$ and $\wt G_\pm=\alg{fl}(G_\pm)$.
The approximate factorization of $A$ returned in Step \ref{a49} is
\[
\bmat{W_+&W_-}\bmat{\wt G_+\\&\wt G_-}\bmat{W_+&W_-}^*
=W_+\wt G_+W_+^*
+W_-\wt G_-W_-^*.
\]
The analogous exact factorization is \[
A
=\bmat{V_+&V_-}\bmat{V_+^*AV_+\\&V_-^*AV_-}\bmat{V_+&V_-}^*
=V_+V_+^*AV_+V_+^*+V_-V_-^*AV_-V_-^*.
\]
We define two additional factorizations, namely the sums $\wt F_++\wt F_-$ and $F_++F_-$ for
\[
\wt F_\pm:=(V_\pm U_\pm)\wt G_\pm (V_\pm U_\pm)^*
\quad\And\quad
F_\pm:=(V_\pm U_\pm)G_\pm (V_\pm U_\pm)^*
\]
Then the final residual is bounded by
\begin{align}
    \magn{A-\bmat{W_+&W_-}\bmat{\wt G_+\\&\wt G_-}\bmat{W_+&W_-}^*}
  &\le
    \magn{A-(F_++F_-)}\label{a63}
\\&+
    \magn{(F_++F_-)-(\wt F_++\wt F_-)}\label{a64}
\\&+
    \magn{\wt F_+-W_+\wt G_+W_+^*}+\magn{\wt F_--W_-\wt G_-W_-^*}\label{a65}
\end{align}
We begin by bounding each term in (\ref{a65}).
Event III guarantees \(\magn{A_\pm-U_\pm D_\pm U_\pm^*}\le\eps'\) which implies $\magn{D_\pm}\le\frac{\magn{A_\pm}+\eps'}{(1-\eps')^2}\le\frac{R'+\eps'}{(1-\eps')^2}$.
Then 
\begin{equation}
\begin{split}
\magn{\wt F_\pm-W_\pm\wt G_\pm W_\pm^*}
  &=\magn{(V_\pm U_\pm)\wt G_\pm(V_\pm U_\pm)^*-W_\pm \wt G_\pm W_\pm^*}
\\&=\magn{(V_\pm U_\pm)\wt G_\pm(V_\pm U_\pm)^*-(V_\pm U_\pm+X_\pm)\wt G_\pm(V_\pm U_\pm+X_\pm)^*}
\\&=\magn{-X_\pm\wt G_\pm(V_\pm U_\pm)^*-(V_\pm U_\pm)\wt G_\pm X_\pm^*-X_\pm\wt G_\pm X_\pm^*}
\\&\le(2+2\eps'+\magn{X_\pm})\rmagn{\wt G_\pm}\magn{X_\pm}
\\&\le(2+2\eps'+(1+2\eps')\eta)\pare{\frac{R'+\eps'}{(1-\eps')^2}+\frac R2}(1+\meps)\cdot(1+2\eps')\eta
\end{split}
\end{equation}
Next we bound (\ref{a64}).
\begin{align*}
    \magn{(F_++F_-)-(\wt F_++\wt F_-)}
  &=\magn{\bmat{V_+&V_-}^*\pare{F_++F_--\wt F_+-\wt F_-}\bmat{V_+&V_-}}
\\&=\magn{\bmat{U_+(G_+-\wt G_+)U_+^*\\& U_-(G_--\wt G_-)U_-^*}}
\\&\le\max_\pm\magn{U_\pm}^2\magn{G_\pm}\meps
\\&\le(1+\eps'/3)^2\pare{\frac{R'+\eps'}{(1-\eps')^2}+\frac R2}\meps
\\&\le2R\meps
\end{align*}
Finally, we bound (\ref{a63}).  Since $\bmat{V_+&V_-}$ is unitary, conjugation by it does not change norms.
\begin{align*}
      \magn{A-(F_++F_-)}
  &=  \magn{\bmat{V_+&V_-}^*\pare{A-(F_++F_-)}\bmat{V_+&V_-}}
\\&=  \magn{\bmat{V_+^*AV_+\\&V_-^*AV_-}-\bmat{U_+ G_+ U_+^*\\&U_- G_- U_-^*}}
\\&=  \max_\pm\magn{V_\pm^*AV_\pm-U_\pm G_\pm U_\pm^*}
\\&=  \max_\pm\magn{V_\pm^*AV_\pm-U_\pm \pare{D_\pm\pm\frac R2I} U_\pm^*}
\\&=  \max_\pm\magn{V_\pm^*AV_\pm-U_\pm D_\pm U_\pm^*\mp\frac R2U_\pm U_\pm^*}
\end{align*}
We bound that final expression as the sum of three terms
\begin{align}
    \magn{V_\pm^*AV_\pm-U_\pm D_\pm U_\pm^*\mp\frac R2U_\pm U_\pm^*}
      &\le
    \magn{V_\pm^*AV_\pm\mp\frac R2I-A_\pm}\label{a66}
    \\&+
    \magn{A_\pm-U_\pm D_\pm U_\pm^*}\label{a67}
    \\&+
    \magn{\frac R2U_\pm U_\pm^*-\frac R2 I}\label{a68}
\end{align}
We obtained a bound for (\ref{a66}) in (\ref{a60}), which is $(2+2\eta)R\eta$.
Event III is exactly that (\ref{a67}) is bounded by $(R'+R_0)\eps'$ and that (\ref{a68}) is bounded by $\frac R2[(1+\eps'/3)^2-1]\le0.34\eps'R$. The final residual then is bounded by the sum
\begin{align*}
 (\ref{a66})
+(\ref{a67})
+(\ref{a68})
+(\ref{a64})
+(\ref{a65})
  &\le
(4+10\eps)R\eta
+(\ref{a67})
+(\ref{a68})
\\&\le
(4+10\eps)R\eta
+(R'+R_0)\eps'
+0.34R\eps'
\\&\le
\pare{\frac{4+10\eps}{5\ell}
+(1+\meps)\pare{\frac12+\frac2\ell}
+0.34}R\eps'+R_0\eps'
\\&\le(R+R_0)\eps'
\\&\le(R+R_0)\eps
\end{align*}
as required.
\end{proof}

\begin{theorem}[Main guarantee]
\label{a4}
Let $A$ be an $n\times n$ Hermitian matrix, $\theta\in(16ne^{-7.4n},1)$, and $\eps\in(0,2^{-15})$.
Using
\begin{align*}
\lg(1/\meps)
  \ge\lg(1/\eps)&+\lg\max\pare{n^{1.5}\qrmu,n^{1.5}\nmu,n^{4.5}\mmu}
\\&+2\lg\lg(1/\eps)+1.5\lg(1/\theta)+\lg\lg(n\lg(1/\eps)/\theta)+23
\end{align*}
bits of precision and 
\[
O\big(
\log(1/\eps)\pare{\log(n)+\log(1/\theta)+\log\log(1/\eps)}T_{\alg{MM}}(n)
+
\log(1/\eps)T_{\alg{QR}}(n)
\big)
\]
\flops, the function call \[
[U,D]=\alg{EIGH}(A,\eps,\theta)
=
\thealg(A,\magn A,\magn A,\eps,\ceil{\lg(1/\eps)}+5,\theta/4n)
\]
achieves the following guarantee with probability at least $1-\theta$. First, $\magn{A-UDU^*}\le2\eps\magn A$ and second, all the singular values of $U$ lie in the interval $[1-\eps/3,1+\eps/3]$.
\end{theorem}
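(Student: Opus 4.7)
The plan is to apply Lemma \ref{a50} inductively over the recursion tree generated by the root call to $\thealg$, after choosing per-node parameters that make the relevant union bounds succeed. I first bound the shape of that tree. The updates $\eps' = (1-1/\ell)\eps$, $R' = (1/2+2/\ell)R$, $\ell'=\ell+1$ with $\ell_0 = \ceil{\lg(1/\eps)}+5 \ge 20$ (since $\eps < 2^{-15}$) give $R_d \le R_0 \cdot 0.6^d$, so the base case $R \le \eps R_0$ is reached within depth $D = O(\lg(1/\eps))$. A branching node (the case with $k_\pm < n$ for both signs) strictly decreases the size of each child, so the subtree of branching nodes is binary with at most $n-1$ internal nodes. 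Since every chain of non-branching descendants has length at most $D$, the whole tree contains at most $T = O(nD) = O(n\lg(1/\eps))$ nodes.

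Next I set up the two union bounds. Each call to $\alg{DEFLATE}$ fails with probability at most $\rho$, and there are at most $2(n-1)$ such calls (two per branching node), so $\rho = \theta/(4n)$ bounds the total Event II failure by $\theta/2$. For Event I, I analyze with per-node parameter $w_i$ chosen so that $w_i/R_i = \theta/(2Tn_i)$; the bound $w_i n_i / R_i$ from Lemma \ref{a50} then sums over the $T$ nodes to at most $\theta/2$. Altogether, Events I and II hold at every node simultaneously with probability at least $1-\theta$.

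Under this event, the output guarantee follows by induction from the leaves. The two base cases are trivial: $n=1$ has zero residual, and $R \le \eps R_0$ returning $(I,0)$ gives residual $\magn A \le R \le \eps R_0 \le \eps(R + R_0)$ with singular values $1$. At any interior node, Events I and II together with the inductive hypothesis (which is exactly Event III) let Lemma \ref{a50} conclude the node's advertised guarantee; its proof also verifies that the precondition $R' \ge \magn{A_\pm}$ propagates correctly. Applied at the root ($R = R_0 = \magn A$), this gives $\magn{A - UDU^*} \le 2\eps\magn A$ with singular values of $U$ in $[1-\eps/3, 1+\eps/3]$.

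Finally I verify precision and runtime. The precision is the minimum over all nodes of the three bounds in Lemma \ref{a50}. Substituting $\rho = \theta/(4n)$, $\eta = \Theta(\eps/\lg(1/\eps))$, $\beta = \Theta(\sqrt{\theta/n}\cdot\eps/(n\lg(1/\eps)))$, $w_i/R_i \ge \theta/(2Tn)$, and the iteration count $N = N_{\alg{SIGN}} = O(\log(n\lg(1/\eps)/\theta))$ from Theorem \ref{a29} (using $\magn{A_{\alg{shift}}^{-1}}\cdot b \le 4R/w = O(Tn/\theta)$), the tightest of the three is the $\usign$ bound, and tracking constants produces the formula stated in the theorem. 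For the runtime, each node performs $N$ iterations of $\alg{SIGN}$ costing $\sim N T_{\alg{MM}}(n_i)$ plus one $\alg{DEFLATE}$ and a few additional multiplications. By convexity of $T_{\alg{MM}}$ with $T_{\alg{MM}}(0)=0$, it is superadditive, so summing $T_{\alg{MM}}(n_i)$ over a single level of the tree is at most $T_{\alg{MM}}(n)$; multiplying by $D$ levels and $N$ operations per node yields the stated runtime. The main obstacle is precision accounting: chasing through the nested definitions of $\udeflate$, $\usign$, $\mu_{\alg g}$, $\beta$, and $\eta$ and verifying that the per-node choice $w_i/R_i = \theta/(2Tn_i)$ balances all three constraints so none secretly dominates with more log factors than the theorem allows.
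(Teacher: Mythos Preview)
Your proposal is correct and follows essentially the same route as the paper's proof: bound the recursion depth, choose a per-node $w$ so that Event~I failures sum to $\theta/2$ while the $2(n-1)$ calls to $\alg{DEFLATE}$ with $\rho=\theta/(4n)$ contribute the other $\theta/2$, then push Lemma~\ref{a50} inductively from the leaves and finish by minimizing the three precision constraints and summing runtimes via superadditivity of $T_{\alg{MM}}$.

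The only notable differences are cosmetic. You bound the depth by the crude $(1/2+2/\ell)\le 0.6$, whereas the paper telescopes $\prod_k(1/2+2/(\ell+k))$ to obtain the sharper $d\le\ell$; both give $D=O(\lg(1/\eps))$, but the telescoping is what makes the clean bounds $\eps_y\ge\eps/3$ and $\ell_y\le 2\ell$ fall out, and you will want those when you actually ``track constants'' to land on the stated $+23$. Similarly, the paper counts nodes by ``at most $n$ per level, $d$ levels'' while you count by ``$\le n-1$ branching nodes, each chain of length $\le D$''; the resulting $w$ normalizations differ ($w_x/R_x=\theta/(2n_xnd)$ versus your $\theta/(2Tn_i)$) but yield the same $R/w=O(n^2\lg(1/\eps)/\theta)$ and hence the same $N_{\alg{SIGN}}$ and precision. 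Your explicit mention of $T_{\alg{MM}}(0)=0$ to get superadditivity is a detail the paper uses silently.
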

\begin{proof}
Consider the computation tree associated with the recursive structure of this algorithm. 
It is a binary tree where each node denotes a call to \thealg.
The number of children of each node is the number of recursive calls to \thealg\, it makes. Let $d$ be the depth of the tree. Let $X$ be the set of all nodes and $X_D$ the set of nodes in which deflate is run.
The values of $R_0=\magn A$ and $\rho=\theta/4n$ used throughout do not change, so we may use those variables without ambiguity. For the other parameters, let $R_x,\eps_x,\ell_x$ denote the values of $R,\eps,\ell$ input to node $x\in X$.
We omit the subscript when $x$ is the root node.
In particular, $R=\magn A$ and $\ell=\ceil{\lg(1/\eps)}+5$.
Set $w_x={\theta\cdot R_x}/\pare{2\cdot n_x\cdot nd}$.
Let $\beta_x,t_x,\eta_x$ be the values as specified by Lemma \ref{a50} and \thealg.
Note that for each node, Event III is implied by Events I-III for its children. Further note Event III is trivially satisfied for leaves. Consequently, it suffices to bound the probabilities of Events I and II for each node.
For $x\in X_D$, Lemma \ref{a50} implies this probability is
\[1-\frac{n_xw_x}{R_x}-2\rho_x=1-\frac{\theta}{2nd}-2\rho.\]
For $x\in X\backslash X_D$, only Event I is necessary, so the relevant probability is just
\[1-\frac{n_xw_x}{R_x}=1-\frac{\theta}{2nd}.\]
Note that $X_D$ are the internal nodes of the computational tree with two children. 
Since this tree has at most $n$ leaves, this means $\abs{X_D}=n-1$.
The number of nodes at each depth is at most $n$, so $\abs{X}\le nd$.
So by union bound the failure probability is at most
\begin{align*}
    \pare{\sum_{x\in X}\frac{\theta}{2nd}}+\pare{\sum_{x\in X_D}2\rho}
  &\le
    \abs{X}\cdot\frac{\theta}{2nd}+\abs{X_D}\cdot2\rho
\\&\le
    \frac\theta2+(n-1)\cdot2\cdot\frac{\theta}{4n}
\\&\le\theta.
\end{align*}
We now turn our attention to the number of bits used.
If $y$ is a child of $x$, then $\ell_y=\ell_x+1$ and
\begin{align}
R_y
&\le(1+\meps)\pare{\frac12+\frac2{\ell_x}}R_x=(1+\meps)\cdot\frac12\cdot\frac{\ell_x+4}{\ell_x}\cdot R_x,
\\
\eps_y
&\ge(1-\meps)\pare{1-\frac1{\ell_x}}\eps_x
   =(1-\meps)\cdot\frac{\ell_x-1}{\ell_y-1}\cdot\eps_x.
\end{align}
In both cases, we obtain a telescoping product for a node $y$ at depth $k$ giving
\begin{align}
R_y&\le(1+\meps)^k\cdot 2^{-k}
\cdot\frac{\ell+k}{\ell}
\cdot\frac{\ell+k+1}{\ell+1}
\cdot\frac{\ell+k+2}{\ell+2}
\cdot\frac{\ell+k+3}{\ell+3}\cdot R,
\\&\le(1+\meps)^k\cdot2^{-k}\cdot(1+k/\ell)^4\cdot R
\\
\eps_y&\ge(1-\meps)^k\cdot\frac{\ell-1}{\ell+k-1}\cdot\eps.
\end{align}
We claim that the depth of the tree $d$ is no more than $\ell$. To see this, note that at depth $k=\ell=\ceil{\lg(1/\eps)}+5$, we have $R_y\le\eps R=\eps R_0$ which guarantees the algorithm terminates. We therefore obtain $\eps_y\ge(1-\meps)^\ell\cdot\frac{\ell-1}{2\ell-1}\eps\ge\eps/3$ and $\ell_y\le2\ell$ for all $y\in X$.
We need to bound the constraints on $\meps$ from Lemma \ref{a50} at each node. We first compute bounds in terms of $\beta_x$.
\spliteq{\label{a69}}{
    \min_{x\in X}\udeflate(\beta_x,n)
  &=  \min_{x\in X}\frac1{4\qrmu(n_x)+2\nmu(n_x)+2\mmu(n_x)}\cdot\beta_x
\\&\ge\frac1{4\qrmu(n)+2\nmu(n)+2\mmu(n)}\cdot\min_{x\in X}\beta_x
}
\spliteq{\label{a70}}{
\min_{x\in X}\usign(\beta_x/2,2R_x,2/w_x)
  &=
\min_{x\in X}\frac{w_x}{4R_x}\cdot\frac1{4\max\pare{N_{\alg{SCALAR}}\pare{\frac{w_x}{4R_x},\frac{\eps_x}{8n_x}}\cdot n_x\mu_{\alg g}(n_x,1.1),n_x^2}}\cdot\frac{\beta_x}2
\\&=
\frac1{64}\min_{x\in X}\frac{\theta}{n_xnd}\cdot\frac1{\max\pare{N_{\alg{SCALAR}}\pare{\frac{\theta}{8n_xnd},\frac{\eps_x}{8n_x}}\cdot n_x\mu_{\alg g}(n_x,1.1),n_x^2}}\cdot\beta_x
\\&\ge
\frac{\theta}{64n^2d}\cdot\frac1{\max\pare{N_{\alg{SCALAR}}\pare{\frac{\theta}{8n^2d},\frac{\eps/3}{8n}}\cdot n\mu_{\alg g}(n,1.1),n^2}}\cdot\min_{x\in X}\beta_x
\\&\ge
\frac{\theta}{64n^3\mu_{\alg g}(n,1.1)}\cdot\frac1{\ell\cdot N_{\alg{SCALAR}}\pare{\frac{\theta}{8n^2\ell},\frac{\eps}{24n}}}\cdot\min_{x\in X}\beta_x
\\&\ge
\frac{\theta}{64n^3\mu_{\alg g}(n,1.1)}\cdot\frac1{\ell\cdot
\pare{2.5+2\lg(8n^2\ell/\theta)+\lg\lg(24n/\eps)}
}\cdot\min_{x\in X}\beta_x
\\&\ge
\frac{\theta}{64n^3\mu_{\alg g}(n,1.1)}\cdot\frac1{\ell\cdot
\lg\pare{363n^4\ell^2\theta^{-2}\lg(24n/\eps)}
}\cdot\min_{x\in X}\beta_x
}
We now bound $\min_{x\in X}\beta_x$.
First note \begin{equation}
\label{a72}
    \min_{x\in X}\eta_x=\min_{x\in X}\frac{\eps'_x}{5\ell_x}\ge\frac{\eps/3}{10\ell}.
\end{equation}
Then note that
\[n_x\cdot t_x
=n_x\cdot(12\sqrt2+6\sqrt{\log(4/\rho)/n_x})
=12\sqrt2n_x+6\sqrt{n_x\log(16n/\theta)}
\]
is monotonically increasing in $n_x$, so the maximum is obtained for $n_x=n$. Apply that observation with (\ref{a72}) and $\theta\ge16ne^{-7.4n}$ to obtain
\spliteq{\label{a73}}{
\min_{x\in X}\beta_x
   =  \min_{x\in X}\frac{\rho^{1/2}\eta_x}{3n_xt_x}
   &= \frac{\theta^{1/2}}{6n^{1/2}}\cdot\min_{x\in X}\frac{\eta_x}{n_xt_x}
   \\&\ge\frac{\theta^{1/2}}{6n^{1/2}}\cdot\frac1{12\sqrt2n+6\sqrt{n\log(16n/\theta)}}\min_{x\in X}\eta_x
   \\&\ge\frac{\theta^{1/2}}{6n^{1/2}}\cdot\frac1{12\sqrt2n+6\sqrt{n\log(16n/\theta)}}\cdot\frac{\eps}{30\ell}
   \\&\ge\frac1{6000}\frac{\theta^{1/2}\eps}{n^{3/2}\ell}.
}
Combining (\ref{a73}) with (\ref{a69}) gives
\[
\min_{x\in X}\udeflate(n_x,\eta_x,\rho_x)\ge
2\times10^{-5}\cdot\frac{\theta^{1/2}\eps}{\ell n^{3/2}\max(\mmu,\nmu,\qrmu)}.
\]
Combining (\ref{a73}) with (\ref{a70}) and $\meps_{\alg g}(n,1.1)\le2\mmu$ for $\mmu\ge10$ from Lemma \ref{a23} gives
\spliteq{}{
\min_{x\in X}\usign(\beta_x/2,2R_x,2/w_x)
  &\ge2.6\times10^{-6}\cdot
\frac{\theta^{3/2}\eps}{\ell^2n^{9/2}\mu_{\alg g}(n,1.1)}\cdot\frac1{\lg\pare{363n^4\ell^2\theta^{-2}\lg(24n/\eps)}
}
\\&\ge1.3\times10^{-6}\cdot
\frac{\theta^{3/2}\eps}{\ell^2n^{9/2}\mmu}\cdot\frac1{\lg\pare{363n^4\ell^2\theta^{-2}\lg(24n/\eps)}
}
}
By taking logs, one sees that it suffices to take $\lg(1/\meps)$ to be at least the larger of the following two quantities.
\spliteq{}{
\lg(1/\meps)&\ge\lg(1/\eps)+\lg\pare{n^{3/2}\max\pare{\nmu,\qrmu}}+\lg(\ell)+0.5\lg(1/\theta)+15.7,
\\
\lg(1/\meps)&\ge\lg(1/\eps)+\lg(n^{9/2}\mmu)+2\lg(\ell)+1.5\lg(1/\theta)
+\lg\lg\pare{{363n^4\ell^2\lg\pare{{24n}/\eps}}/{\theta^2}}+19.6.
}
Plugging in $\ell=\ceil{\lg(1/\eps)}+5$ and simplifying gives the final precision result.
We conclude with analysis of the runtime. The number of \flops\,used by $\alg{SIGN}$ in node $x$ is 
\spliteq{\label{a74}}{
O\pare{
\pare{\lg\pare{\frac{R_x}{w_x}}+\lg\lg\pare{\frac{n_x}{\eps_x}}}T_{\alg{MM}}(n_x)
}
  &=
O\pare{
\pare{\lg\pare{\frac{n_xnd}{\theta}}+\lg\lg\pare{\frac{n}{\eps}}}T_{\alg{MM}}(n_x)
}
\\&=
O\pare{
\pare{\lg\pare{\frac n\theta}+\lg\lg\pare{\frac1\eps}}T_{\alg{MM}}(n_x)
}
\\&=:
f_{\alg{SIGN}}(n_x)
}
The number of \flops\,used by $\alg{DEFLATE}$ in node $x$ is
\spliteq{\label{a75}}{
T_{\alg{QR}}(n_x)+T_{\alg{MM}}(n_x)+O(n^2)
=:f_{\alg{DEFLATE}}(n_x)
}
The number of \flops\,used by all other steps in the algorithm are dominated by these terms.
Let $X_k$ be the set of nodes at depth $k$. Then $\sum_{x\in X_k}n_x\le n$.
Note $f_{\alg{SIGN}}$ and $f_{\alg{DEFLATE}}$ are convex, so
\[
\sum_{x\in X_k}\pare{f_{\alg{SIGN}}(n_x)+f_{\alg{DEFLATE}}(n_x)}
\le
f_{\alg{SIGN}}(n)+f_{\alg{DEFLATE}}(n).
\]
Summing over $k=1,\cdots,d=\lg\ceil{1/\eps}+5$ gives the final result.
\end{proof}
We conclude with a final remark about removing the $\lg(1/\theta)$ terms from the bit requirement.
\begin{remark}[Boosting success probability]
One can estimate the residual in $O(n^2)$ time by computing
\[\magn{(UDU^*-A)x}\]
for randomly selected $x$. If the residual is too high, one can rerun \alg{EIG} with fresh randomness. This allows one to boost the probability of success. If the desired failure probability is $\theta'$, one can take $\theta=1/2$ and repeat the call to \alg{EIGH} $\lg(1/\theta')$ times. So at the expense of a longer runtime, one can remove the $\lg(1/\theta)$ terms from the precision bound.
\end{remark}

\newpage
\bibliographystyle{alpha}
\bibliography{outbib}

\newcommand{\etalchar}[1]{$^{#1}$}
\begin{thebibliography}{BGVKS20}

\bibitem[ABB{\etalchar{+}}18]{b25}
Diego Armentano, Carlos Beltr{\'a}n, Peter B{\"u}rgisser, Felipe Cucker, and
  Michael Shub.
\newblock A stable, polynomial-time algorithm for the eigenpair problem.
\newblock {\em Journal of the European Mathematical Society}, 20(6):1375--1437,
  2018.

\bibitem[BD73]{b3}
A.~N. Beavers and E.~D. Denman.
\newblock A computational method for eigenvalues and eigenvectors of a matrix
  with real eigenvalues.
\newblock {\em Numerische Mathematik}, 21:389--396, 1973.

\bibitem[BD74]{b4}
A.N. Beavers and E.D. Denman.
\newblock A new similarity transformation method for eigenvalues and
  eigenvectors.
\newblock {\em Mathematical Biosciences}, 21(1):143--169, 1974.

\bibitem[BD93]{b26}
Zhaojun Bai and James~W. Demmel.
\newblock Design of a parallel nonsymmetric eigenroutine toolbox, part i.
\newblock Technical Report UCB/CSD-92-718, EECS Department, University of
  California, Berkeley, Feb 1993.

\bibitem[BDD11]{b20}
Grey Ballard, James Demmel, and Ioana Dumitriu.
\newblock Minimizing communication for eigenproblems and the singular value
  decomposition.
\newblock Technical Report UCB/EECS-2011-14, Feb 2011.

\bibitem[BDG97]{b21}
Zhaojun Bai, James Demmel, and Ming Gu.
\newblock An inverse free parallel spectral divide and conquer algorithm for
  nonsymmetric eigenproblems.
\newblock {\em Numerische Mathematik}, 1997.

\bibitem[BGVKS20]{b0}
Jess Banks, Jorge Garza-Vargas, Archit Kulkarni, and Nikhil Srivastava.
\newblock Pseudospectral shattering, the sign function, and diagonalization in
  nearly matrix multiplication time.
\newblock In {\em 2020 IEEE 61st Annual Symposium on Foundations of Computer
  Science (FOCS)}, pages 529--540, 2020.

\bibitem[BGVS22]{b12}
Jess Banks, Jorge Garza-Vargas, and Nikhil Srivastava.
\newblock Global convergence of hessenberg shifted qr ii: Numerical stability.
\newblock {\em arXiv preprent arXiv:2205.06810}, 2022.

\bibitem[BGVS23]{b11}
Jess Banks, Jorge Garza-Vargas, and Nikhil Srivastava.
\newblock {Global Convergence of Hessenberg Shifted QR I: Exact Arithmetic}.
\newblock {\em arXiv preprint arXiv:2111.07976}, 2023.

\bibitem[BKMS21]{b34}
Jess Banks, Archit Kulkarni, Satyaki Mukherjee, and Nikhil Srivastava.
\newblock Gaussian regularization of the pseudospectrum and davies’
  conjecture.
\newblock {\em Communications on Pure and Applied Mathematics},
  74(10):2114--2131, 2021.

\bibitem[BX08]{b15}
Ralph Byers and Hongguo Xu.
\newblock A new scaling for {N}ewton's iteration for the polar decomposition
  and its backward stability.
\newblock {\em SIAM Journal on Matrix Analysis and Applications},
  30(2):822--843, 2008.

\bibitem[CCNS14]{b30}
Jie Chen, Edmond Chow, and The Newton-Schulz.
\newblock A stable scaling of newton-schulz for improving the sign function
  computation of a hermitian matrix.
\newblock 2014.

\bibitem[DB76]{b5}
Eugene~D. Denman and Alex~N. Beavers.
\newblock The matrix sign function and computations in systems.
\newblock {\em Applied Mathematics and Computation}, 2(1):63--94, 1976.

\bibitem[DDH07]{b7}
James Demmel, Ioana Dumitriu, and Olga Holtz.
\newblock Fast linear algebra is stable.
\newblock {\em Numerische Mathematik}, 108(1):59–91, October 2007.

\bibitem[DDHK07]{b6}
James Demmel, Ioana Dumitriu, Olga Holtz, and Robert Kleinberg.
\newblock Fast matrix multiplication is stable.
\newblock {\em Numerische Mathematik}, 106(2):199–224, February 2007.

\bibitem[DT71]{b9}
TJ~Dekker and JF~Traub.
\newblock The shifted qr algorithm for hermitian matrices.
\newblock {\em Linear Algebra Appl}, 4:137--154, 1971.

\bibitem[Ede88]{b33}
Alan Edelman.
\newblock Eigenvalues and condition numbers of random matrices.
\newblock {\em SIAM Journal on Matrix Analysis and Applications},
  9(4):543--560, 1988.

\bibitem[Fra61]{b1}
John~GF Francis.
\newblock The {QR} transformation a unitary analogue to the {LR}
  transformation—{P}art 1.
\newblock {\em The Computer Journal}, 4(3):265--271, 1961.

\bibitem[Fra62]{b2}
John~GF Francis.
\newblock The {QR} transformation—{P}art 2.
\newblock {\em The Computer Journal}, 4(4):332--345, 1962.

\bibitem[Gan90]{b31}
Walter Gander.
\newblock Algorithms for the polar decomposition.
\newblock {\em SIAM J. Sci. Comput.}, 11:1102--1115, 1990.

\bibitem[HP78]{b10}
W.~Hoffmann and B.~N. Parlett.
\newblock A new proof of global convergence for the tridiagonal \$ql\$
  algorithm.
\newblock {\em SIAM Journal on Numerical Analysis}, 15(5):929--937, 1978.

\bibitem[KL95]{b13}
Charles~S. Kenney and Alan~J Laub.
\newblock The matrix sign function.
\newblock {\em IEEE Transactions on Automatic Control}, 40(8):1330--1348, 1995.

\bibitem[KZ03]{b14}
Andrzej Kiełbasiński and Krystyna Zietak.
\newblock Numerical behaviour of higham's scaled method for polar
  decomposition.
\newblock {\em Numerical Algorithms}, 32:105--140, 2003.

\bibitem[KZ09]{b16}
Andrzej Kielbasinski and Krystyna Zietak.
\newblock Note on "a new scaling for newton's iteration for the polar
  decomposition and its backward stability" by r. byers and h. xu*.
\newblock {\em SIAM Journal on Matrix Analysis and Applications},
  31(3):1538--1539, 2009.
\newblock Copyright - Copyright Society for Industrial and Applied Mathematics
  2009; Document feature - Equations; ; Last updated - 2023-12-04.

\bibitem[NBG10]{b18}
Yuji Nakatsukasa, Zhaojun Bai, and Fran\c{c}ois Gygi.
\newblock Optimizing halley's iteration for computing the matrix polar
  decomposition.
\newblock {\em SIAM Journal on Matrix Analysis and Applications},
  31(5):2700--2720, 2010.

\bibitem[NF16]{b29}
Yuji Nakatsukasa and Roland~W. Freund.
\newblock Computing fundamental matrix decompositions accurately via the matrix
  sign function in two iterations: The power of {Z}olotarev's functions.
\newblock {\em SIAM Review}, 58(3):461--493, 2016.

\bibitem[NH12]{b17}
Yuji Nakatsukasa and Nicholas~J. Higham.
\newblock Backward stability of iterations for computing the polar
  decomposition.
\newblock {\em SIAM Journal on Matrix Analysis and Applications},
  33(2):460--479, 2012.

\bibitem[NH13]{b19}
Yuji Nakatsukasa and Nicholas~J. Higham.
\newblock Stable and efficient spectral divide and conquer algorithms for the
  symmetric eigenvalue decomposition and the svd.
\newblock {\em SIAM Journal on Scientific Computing}, 35(3):A1325--A1349, 2013.

\bibitem[SML24]{b24}
Aleksandros Sobczyk, Marko Mladenović, and Mathieu Luisier.
\newblock Invariant subspaces and pca in nearly matrix multiplication time,
  2024.

\bibitem[Wil68]{b8}
J.H. Wilkinson.
\newblock Global convergene of tridiagonal qr algorithm with origin shifts.
\newblock {\em Linear Algebra and its Applications}, 1(3):409--420, 1968.

\end{thebibliography}

\end{document}